\newlist{myQuoteEnumerate}{enumerate}{2}
\setlist[myQuoteEnumerate,1]{label=(\alph*)}
\setlist[myQuoteEnumerate,2]{label=(\alph*)}
\definecolor{awesome}{rgb}{1.0, 0.13, 0.32}
\definecolor{airforceblue}{rgb}{0.36, 0.54, 0.66}
\newtheorem{thm}{Theorem}[section]
\newtheorem{thmm}{Theorem}[section]
\newtheorem{cor}{Corollary}[thmm]
\newtheorem{lem}{Lemma}[section]
\newtheorem{defi}{Definition}[section]
\newtheorem{remark}{Remark}[section]
\definecolor{rulecolor}{RGB}{0,71,171}
\definecolor{tableheadcolor}{gray}{0.92}
\numberwithin{equation}{section}
\newcommand\quotient[2]{
        \mathchoice
            {
                \text{\raise1ex\hbox{$#1$}\Big/\lower1ex\hbox{$#2$}}%
            }
            {
                #1\,/\,#2
            }
            {
                #1\,/\,#2
            }
            {
                #1\,/\,#2
            }
    }
\definecolor{aurometalsaurus}{rgb}{0.43, 0.5, 0.5}
\definecolor{darkjunglegreen}{rgb}{0.1, 0.14, 0.13}
\definecolor{coolblack}{rgb}{0.0, 0.18, 0.39}
\definecolor{cobalt}{rgb}{0.0, 0.28, 0.67}
\title{Hausdorff dimension of Julia sets of unicritical correspondences }
\author{Carlos Siqueira \\  {  \tiny \today}}
\address{Departamento de Matem\'atica, IME-UFBA.  Salvador -- BA, Brazil. CEP 40170-115
}
\email[C.~Siqueira]{carlos.siqueira@ufba.br}
\begin{document}

\hypersetup{linkcolor=cobalt}

\begin{abstract} We show that if $\beta>1$ is a rational number and the Julia set  $J$ of the holomorphic correspondence $z^{\beta}+c$ is a locally eventually onto hyperbolic repeller, then the Hausdorff dimension  of $J$ is bounded from above by the zero of the associated  pressure function. As a consequence,  we conclude that the Julia set of the correspondence has zero Lebesgue measure for parameters close to zero, whenever $q^2<p$ and $\beta=p/q$ in lowest terms. 
\end{abstract}

\maketitle

\keywords{MSC-class 2020:  37F05 (Primary)   37D35 (Secondary). }


\section{Introduction}


 \noindent This paper is concerned with the {oneparameter} family of  holomorphic correspondences {acting on the complex plane} \begin{equation}\label{woekg} (w-c)^q=z^p,\end{equation} where $p>q \geq 2$ {are integers.}  {A holomorphic correspondence, {as defined in this paper,} is a relation $z\mapsto w$ determined by a polynomial equation $P(z,w)=0$ in two complex variables. The family \eqref{woekg} {has bidegree $(p,q)$; this means  that \eqref{woekg} defines a multifunction {that maps} every $z\neq0$ to $q$ different values of $w$ and its inverse maps every $w\neq c$ to $p$ values of $z.$} {We shall establish an upper bound for the Hausdorff dimension of certain Julia sets in this family (Theorem A), thereby showing that the Julia set of \eqref{woekg} has zero Lebesgue measure for parameters $c$ close to zero when $q^2 <p$ (Theorem B).}

The family  \eqref{woekg} can be regarded as a generalisation of the quadratic family $z^2+c$, for if $\beta=p/q$ in lowest terms, then \eqref{woekg} is the multifunction $\mathbf{f}_c(z)=z^{\beta}+c$, in the sense that $\mathbf{f}_c(z)$ is the set of all $w$ satisfying  \eqref{woekg}. There is another way of generalising the quadratic family in the context of holomorphic correspondences using matings. Indeed, the space of holomorphic correspondences includes  all possible Kleinian groups and rational maps, and some correspondences concentrate in a single expression $P(z,w)=0$ the dynamics of a polynomial map and a Kleinian group.   This is the case of the oneparameter family of holomorphic correspondences $\mathcal{F}_a$
 introduced by Bullett and Penrose nearly thirty years ago in \cite{Bullett1994}, when they discovered that   if $4\leq a \leq 7$, then the correspondence $\mathcal{F}_a$ is a mating between some quadratic map $z^2 +c$ and the modular group $\operatorname{PSL}(2, \mathbb{Z}).$ In the same paper \cite{Bullett1994}, the authors conjectured that the connectedness locus of the family $\mathcal{F}_a$ is homeomorphic to the Mandelbrot set. Recently, Bullett and Lomonaco  \cite{BL19}  have shown that  {\it $\mathcal{F}_a$   is a mating between some parabolic map and the modular group, for every parameter $a$ in the connectedness locus of the family $\mathcal{F}_a$.} Moreover, they have developed a strategy to prove that the connectedness locus of the family of matings is indeed homeomorphic to the Mandelbrot set, see \cite[page 4]{BL17} and \cite{BL20}.  Holomorphic correspondences also appear in many other contexts; indeed,  Lee, {Lyubich,} Makarov and Mukherjee \cite{Mukherjee} have  investigated the dynamics {oneparameter} families of Schwarz reflections which give rise to anti-holomorphic correspondences that are, in a suitable sense, matings of anti-rational maps with the abstract modular group $\mathbb{Z}_2 * \mathbb{Z}_3.$

\subsection{Hausdorff dimension} 
  The Julia set $J(\mathbf{f}_c)$ of $\mathbf{f}_c(z)=z^{\beta} +c$, when $\beta>1$,  is defined as the closure of all repelling cycles (see section \ref{qpdkg});  
it is always the projection of a solenoid in $\mathbb{C}^2$ when the parameter $c$ is close to zero, as described by Siqueira and Smania in \cite{SS17}.
Simon \cite{Simon97} has derived an explicit formula for the Hausdorff dimension of the Smale-Williams solenoid which relies on the zero of the pressure function, see \cite[page 1224]{Simon97}.  Before Simon,  the pioneer work of Bowen  \cite{Bowen1979} on quasi-Fuchsian groups was the first to establish the formula $P(t\phi)=0$, relating  the Hausdorff dimension to the unique zero of the pressure function.    Similarly, in the early eighties Ruelle \cite{Ruelle}  proved that  if the Julia set {$J(f)$} of a rational function $f$ is hyperbolic, then the Hausdorff dimension of {$J(f)$} depends real analytically on $f.$ The strategy used by Ruelle consists of: 

(I) showing that the Hausdorff dimension is given by the \emph{Bowen's formula:}  $P(t\phi)=0,$ where $t=\dim_H J$ and  $\phi(z)=-\log |f'(z)|$ is the geometric potential; and

 (II) proving that {$f \mapsto t(f)$} is real analytic, where {$t=t(f)$} is implicitly given by $P(t\phi)=0.$

 In the context of holomorphic correspondences, we have the following result.

 \begin{thm}[Theorem \ref{kfieg}] \label{bjdq}Suppose $J(\mathbf{f}_c)$ is a locally eventually onto hyperbolic repeller and let $t_c$ be the unique zero of the pressure function. Then $\operatorname{dim_{H}} J(\mathbf{f}_c) \leq t_c. $

 \end{thm}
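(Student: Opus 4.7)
The plan is to prove the upper bound by the standard Bowen--Ruelle strategy adapted to the correspondence setting: construct a sequence of efficient covers of $J(\mathbf{f}_c)$ using inverse branches of high iterates of the correspondence, estimate the $t$-Hausdorff sum by the partition function of the pressure, and then let $t \searrow t_c$.

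First, I would exploit the hyperbolic repeller hypothesis to extract a \cifs-type description of the dynamics on $J(\mathbf{f}_c)$. The locally eventually onto property lets me choose a finite open cover of $J(\mathbf{f}_c)$ by topological disks $U_1,\dots,U_N$ such that, for each $j$ and each $n\ge 1$, every admissible inverse branch $g_\omega=g_{\omega_1}\circ\cdots\circ g_{\omega_n}$ of the $n$-fold iterated correspondence extends univalently to a slightly larger disk $\widetilde U_j\Supset U_j$ and contracts uniformly by at least $\lambda^n$ with $\lambda\in(0,1)$. Holomorphy and the Koebe distortion theorem then yield, uniformly in $\omega$ and $n$,
\[
\operatorname{diam}\bigl(g_\omega(U_j)\bigr)\;\le\; K\,\|g_\omega'\|_{U_j}\cdot\operatorname{diam}(U_j).
\]

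Next, I would use the locally eventually onto property, together with invariance of $J(\mathbf{f}_c)$ under the correspondence, to argue that for each $n$ the family $\{g_\omega(U_j)\}$, indexed by admissible words $\omega$ of length $n$ and disks $U_j$ meeting $J(\mathbf{f}_c)$, covers the Julia set. The diameters are bounded by $C\lambda^n\to 0$, so these are admissible covers in the definition of $t$-dimensional Hausdorff measure. Applying the Koebe estimate and the multiplicative chain rule,
\[
\sum_{j,\;|\omega|=n}\operatorname{diam}\bigl(g_\omega(U_j)\bigr)^{t}\;\le\; C'\sum_{|\omega|=n}\|g_\omega'\|^{t},
\]
and the right-hand side is, up to uniform constants, the $n$-th partition sum of the geometric potential $\phi=-\log|\mathbf{f}_c'|$ extended to the branches of the correspondence. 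For any $t>t_c$ the pressure $P(t\phi)$ is strictly negative, so this sum decays exponentially in $n$; hence $\mathcal H^{t}(J(\mathbf{f}_c))=0$ and therefore $\dim_H J(\mathbf{f}_c)\le t$. Letting $t\searrow t_c$ concludes the proof.

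The principal obstacle is the passage from a multi-valued correspondence to a genuine \cifs. Unlike the rational-map case, inverse branches of $\mathbf{f}_c$ are not branches of a single-valued self-map of the plane, so one must carefully identify which concatenations of branch choices are \emph{admissible} (i.e.\ give well-defined holomorphic maps on the chosen disks), verify that the Koebe bounds survive composition across sheets, and, crucially, check that the partition sums produced by this admissible language coincide with the pressure function that defines $t_c$ in the statement of the theorem. Once this \cifs structure is in place, the rest follows the classical Ruelle/Mauldin--Urba\'nski template; the real work is in certifying that the branch combinatorics of \eqref{woekg} yields an iterated function system whose pressure is precisely the one appearing in the theorem.
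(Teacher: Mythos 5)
Your strategy is the classical Bowen--Ruelle cover argument, and in broad outline it has the right shape, but it takes a different route from the paper and, as you yourself concede, leaves the decisive identification undone. The paper never builds a planar \cifs. It covers the $\mathbb{C}^2$-extension $J(f_c)$ by a \emph{minimal} family of dynamic balls $B(x_i,n,\epsilon)$ for the expanding map $f_c$ (Theorem~\ref{gjegde}), so that the half-radius balls are pairwise disjoint; it then projects each centre's forward orbit and pulls back $B_\epsilon(z_n)$ through the branch $g_{z_0\cdots z_n}$, with Lemma~\ref{gjecb} guaranteeing that $\pi V_i\subset U_i$. Crucially it works \emph{at} $t=t_c$ with the invariant Gibbs measure $\mu$ for $\phi=t_c\varphi_c$: since $P(\phi)=0$, the Gibbs inequality turns $|U_i|^{t_c}$ into $\mu(B(x_i,n,\epsilon/2))$ up to a uniform constant, and disjointness gives $\sum_i|U_i|^{t_c}\le (2\epsilon K)^{t_c}C_{\epsilon/2}$ uniformly in $n$, proving $\mathcal H^{t_c}(J(\mathbf f_c))<\infty$. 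You instead argue at $t>t_c$ and take $t\searrow t_c$, which, if completed, yields only $\dim_H\le t_c$ (adequate for this statement, but weaker than what the paper shows).

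The gap you flag at the end is a real one, not a formality, and it is exactly why the $\mathbb C^2$-extension appears in the paper. The geometric potential $\varphi_c(x)=-\log|p(z_1-z_0)/(qz_0)|$ is \emph{not a function on $J(\mathbf f_c)\subset\mathbb C$}: a point $z_0$ has up to $q$ images $z_1$, so the value depends on the branch chosen, not on the point. It only becomes a genuine potential on $J(f_c)\subset\mathbb C^2$, equivalently on the space of full forward orbits with the shift, and $t_c$ is by definition (Theorem~\ref{igdg}) the zero of the pressure of \emph{that} system. To close your argument you would need (i) an analogue of Lemma~\ref{gjecb} to control which branch concatenations are admissible and consistent on shrinking disks, (ii) an identification of your word-indexed partition sum $\sum_{|\omega|=n}\|g_\omega'\|^t$ with $\mathcal L_{t\varphi_c}^n(1)$ for the $\mathbb C^2$-extension (this is where one must worry about multiplicities: $J(\mathbf f_c)$ is the image of a solenoid, and distinct orbits can project to the same planar point), and (iii) the Ruelle--Perron--Frobenius theorem to convert $P(t\varphi_c)<0$ into the exponential decay you assert. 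Items (i)--(iii) are essentially Lemma~\ref{gjecb} and Theorems~\ref{gjegde}, \ref{gjdcs}, \ref{igdg}; without them, ``the pressure is strictly negative so the sum decays exponentially'' is an assertion, not a proof.
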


 The Bowen parameter $t_c$ in Theorem \ref{bjdq} comes from an expanding and topologically mixing map $f_c: J(f_c) \to J(f_c)$ acting on a `Julia set' $J(f_c) \subset \mathbb{C}^2$ -- see Theorem \ref{igdg}.
  The shape of $J(f_c)$ is similar to that of the Smale-Williams solenoid for parameters close to zero, and the projection of $J(f_c)$ is always the Julia set $J(\mathbf{f}_c)$  in the plane \cite{SS17}.  The sets $J(f_c)$ are related by a holomorphic motion in $\mathbb{C}^2$ (section \ref{bndhw}). 
  Since $J(f_c) \subset \mathbb{C}^2$ moves holomorphically, we believe that the Hausdorff dimension of $J(f_c)$  depends real analytically, or at least continuously on $c.$ However, this problem is still unsolved. (It should be noticed that even though  $t_c$ comes from the dynamics of $f_c$ on $J(f_c) \subset \mathbb{C}^2,$ in this paper the parameter $t_c$  is used to estimate the Hausdorff dimension of the Julia set $J(\mathbf{f}_c)$ \emph{in the plane}).

   The estimate provided by Theorem \ref{bjdq} can be used to derive the following result.

  \begin{thm}[Corollary \ref{hjeeh}] \label{qpoig}If $c$ is sufficiently close to zero and $q^2<p$, then the solenoidal Julia set of $z^{p/q} +c$ has zero Lebesgue measure.

\end{thm}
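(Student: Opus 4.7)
The strategy is to combine Theorem \ref{bjdq} with an explicit computation of the Bowen parameter at $c = 0$ and a local continuity argument. Since Theorem \ref{bjdq} gives $\dim_H J(\mathbf{f}_c) \leq t_c$, the conclusion will follow as soon as we prove $t_c < 2$ on some neighborhood of $c = 0$ under the hypothesis $q^2 < p$, as this immediately forces zero two-dimensional Lebesgue measure for $J(\mathbf{f}_c)$.

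The first step is to identify $t_0$. At $c = 0$ the planar Julia set of $\mathbf{f}_0(z) = z^{p/q}$ is the unit circle, and the lifted expanding repeller $f_0 : J(f_0) \to J(f_0)$ is Bernoulli-type: the inverse relation $z^p = w^q$ has exactly $p$ branches in $z$, and each branch has modulus of derivative equal to $q/p$ along the unit circle (since $\bigl|\tfrac{d}{dw}\,w^{q/p}\bigr| = q/p$ for $|w| = 1$). The pressure function at $c = 0$ therefore takes the closed form
\begin{equation*}
P_0(-t \log|f_0'|) \;=\; \log p \,-\, t \log(p/q),
\end{equation*}
whose unique zero is $t_0 = \log p / \log(p/q)$. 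A one-line computation shows $t_0 < 2$ if and only if $\log p < 2\log(p/q)$, which is equivalent to $q^2 < p$, so the target inequality holds with strict margin at $c = 0$.

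The second step is to upgrade this to an open neighborhood of $0$ via continuity of $c \mapsto t_c$. For $c$ sufficiently close to $0$ the map $f_c$ is uniformly expanding and topologically mixing on $J(f_c)$, and by the holomorphic motion of $J(f_c)$ in $\mathbb{C}^2$ noted in section \ref{bndhw} the family varies holomorphically. Standard thermodynamic formalism for holomorphic families of conformal expanding repellers then gives joint continuity of $P_c(-t\log|f_c'|)$ in $(c,t)$, hence continuity of $c \mapsto t_c$ at $c = 0$. Combined with $t_0 < 2$, this yields $t_c < 2$ on a neighborhood of $c = 0$, and Theorem \ref{bjdq} completes the argument.

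The principal obstacle I foresee is establishing uniform expansion and bounded distortion constants for $f_c$ that are stable under the holomorphic motion near $c = 0$; without such uniform constants, continuity of the Bowen parameter at $c = 0$ cannot be invoked in the usual way. The advantage of this route, however, is that one needs only continuity at the single parameter $c = 0$, not real analyticity on the whole hyperbolic locus, which is the broader open problem the authors highlight after Theorem \ref{bjdq}.
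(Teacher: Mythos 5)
Your proof reaches the right conclusion via a genuinely different route from the paper, and the route has a weak link that the paper's own argument is designed to sidestep. You compute $t_0 = \log p / \log(p/q) < 2 \Leftrightarrow q^2 < p$ (this computation is correct and, in fact, the paper makes the same computation in a closing Remark only to observe that $t_0 > 1 = \dim_H \mathbb{S}^1$, i.e.\ the Bowen bound is not sharp at $c=0$), and then you want to propagate $t_c < 2$ to a neighborhood of $0$ by proving continuity of $c \mapsto t_c$. The paper does not prove any such continuity and deliberately avoids it. Instead, it establishes the elementary bound of Corollary~\ref{nmcs}: if $|g'_{z,w}(z)| \geq \kappa > 1$ on $J(\mathbf{f}_c)$ then $t_c \leq \log p / \log \kappa$, which follows in two lines from $\mathcal{L}_{t_c\varphi_c}^n(1) \leq p^n \kappa^{-n t_c}$ and the fact that this must converge to a positive function by Ruelle--Perron--Frobenius. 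The continuity input the paper then needs is far softer than what you invoke: continuity of $c \mapsto J(\mathbf{f}_c)$ in the Hausdorff topology (Theorem~5.11 of \cite{Rigidity}), which together with $J(\mathbf{f}_0) = \mathbb{S}^1$ and $|g'_{z,w}| \equiv p/q$ on $\mathbb{S}^1$ lets one pick $\kappa$ arbitrarily close to $p/q$ for $c$ near $0$, giving $\log p / \log \kappa < 2$.

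The weak link in your route is precisely the one you flag. You appeal to ``standard thermodynamic formalism for holomorphic families of conformal expanding repellers,'' but $f_c$ is not a one-dimensional conformal expanding repeller: it is a $p$-to-$1$ map on a compact subset of $\mathbb{C}^2$, and the potential $\varphi_c$ is built from the planar derivative of the branch $g_{z_0,z_1}$, not from any Jacobian of $f_c$. The standard continuity results do not apply off the shelf. What can be made to work is the conjugacy route the paper itself points to: $h_c$ conjugates $f_{c_0}$ on $J(f_{c_0})$ to $f_c$ on $J(f_c)$, hence $P(t\varphi_c; f_c) = P(t\,\varphi_c \circ h_c; f_{c_0})$ by topological invariance of pressure, and continuity of $c \mapsto \varphi_c \circ h_c$ in sup norm (which follows from continuity of the holomorphic motion and the explicit form of $\varphi_c$) gives joint continuity of the pressure and hence of $t_c$. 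If you intend to take this path you should spell out this conjugacy argument rather than cite ``standard'' theory; note also that the paper explicitly labels the regularity of $c \mapsto \dim_H J(f_c)$ as an open problem, so the reader needs to see clearly that you are asserting continuity of the pressure-zero $t_c$, not of the Hausdorff dimension, and that the former is accessible even if the latter is not. Finally, be aware that \eqref{lkmfg} as printed in the paper contains a typo ($z_1 - z_0$ should read $z_1 - c$, cf.\ the derivative formula $(p/q)(w-c)/z$ correctly quoted in the proof of Corollary~\ref{hjeeh}); your derivation via $|g'| = (p/q)|w|/|z|$ on the circle is the correct one.
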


See \cite[page 3106]{SS17} for a typical figure of the solenoidal Julia set of Theorem \ref{qpoig}.
 

\subsection{Hyperbolic components} The \emph{connectedness locus} of the family $z^{\beta}+c$, denoted by $\mathcal{M}_{\beta}$ (for $\beta >1$ and $\beta \in \mathbb{Q}),$ is the set of all parameters $c$ for which the Julia set of $z^{\beta}+c$ is connected.   We define $\mathcal{M}_{\beta,0}$  as the set of $c$ for which zero has at least one bounded forward orbit under $z^{\beta}+c.$ {Siqueira has shown \cite[Theorem 2.3]{Rigidity}} that $\mathcal{M}_{\beta}$  contains $\mathcal{M}_{\beta,0}.$ Therefore,   the Julia set of $z^{\beta}+c$ is connected whenever the critical point  has at least one bounded forward orbit under $\mathbf{f}_c.$ 

\noindent For the quadratic family we have $\beta=2$ and the definitions  of $\mathcal{M}_{2}$ and $\mathcal{M}_{2,0}$ coincide with the Mandelbrot set. For some non-integer values of $\beta$, the parameter space $\mathcal{M}_{\beta} - \mathcal{M}_{\beta,0}$ {is known to be nonempty } and generates an intriguing class of Julia sets named Carpets: they are hyperbolic, connected, and {have infinitely many holes.} In spite of being hyperbolic, Carpets seem to have positive area. See \cite[section 3]{BLS} for more details and figures.

A {special} version of the Fatou conjecture for polynomial maps states that the interior of $\mathcal{M}_d$ consists of hyperbolic parameters. One implication of this conjecture is: \emph{the Julia set of every polynomial $z^d+c$ has zero Lebesgue measure if $c$ is in the interior of $\mathcal{M}_d$.} The Hausdorff dimension $d(c)$ of the Julia set of the polynomial $z^d +c$ is real analytic on every hyperbolic component; in particular, it is real analytic on $\mathbb{C} - \mathcal{M}_d.$  In the boundary of $\mathcal{M}_d,$ the Hausdorff dimension $d(c)$  is not even continuous at semihyperbolic parameters. However, Rivera-Letelier  \cite{rivera-letelier2001} has established some sort of continuity of $d(c)$ at semihyperbolic parameters $c_0$ in the boundary of the multibrot set $\mathcal{M}_d$, proving that {$d(c_n) \to d(c_0),$} whenever $c_n$ converges to $c_0$ in an appropriate way (see \cite{rivera-letelier2001} for more details). 

Little is known about the connectedness locus $\mathcal{M}_{\beta}$ of the family $z^{\beta}+c.$ One possible way to start the investigation has been presented in \cite{Rigidity}: study the dynamics of $z^{\beta}+c$ when $c$ is close to a centre. Recall from Douady and Hubbard \cite{DH84, DH85}  that the  Mandelbrot set has infinitely many hyperbolic components  $U$, each of which encoded by a centre $c\in U$; the centre is the only parameter in $U$ for which  the orbit of the critical point of $z^2+c$ is a cycle. For the correspondence $z^{\beta}+c,$ the parameter $a$ is a \emph{simple centre} if only  one forward orbit of zero $(0, z_1, z_2, \ldots)$  is periodic, and any other orbit $(0, w_1, w_2, \ldots)$ diverges to infinity ({see Definition \ref{hgdw};} it is not necessary to compute all orbits to test if $a$ is a simple centre. Indeed, the basin of attraction of $\infty$ contains a forward invariant  disk $|z|>R$, and therefore we have to check only finitely many iterates).

There exists an open set $H_{\beta}$  containing the complement of $\mathcal{M}_{\beta,0}$ and every simple centre such that, for any $c$ in $H_{\beta},$  $z^{\beta}+c$ is hyperbolic and its Julia set is stable by means of branched holomorphic motions; moreover, $J(\mathbf{f}_c)$ is a locally eventually onto (LEO) hyperbolic repeller, whenever $c\in H_{\beta}.$  { (See Definition 4.2, Corollary 5.7.1 and Theorems 5.8 and 5.10 of \cite{Rigidity}).   }

Combining these facts with Theorem \ref{bjdq} we have the following result. 

\begin{thm}[Corollaries \ref{abc} and \ref{abcxz}] If $c$ is sufficiently close to a simple centre, or if $c$ belongs to the complement of $\mathcal{M}_{\beta,0}$, then $$\dim_H J(\mathbf{f}_c) \leq t_c.$$
\end{thm}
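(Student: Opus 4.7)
The plan is to deduce this theorem as an immediate corollary of Theorem~\ref{bjdq}, invoking the structural results from \cite{Rigidity} that were recalled in the paragraph just above the statement. Specifically, the paper cited there guarantees the existence of an open set $H_\beta \subset \mathbb{C}$ that simultaneously contains the complement of $\mathcal{M}_{\beta,0}$ and every simple centre, and such that $J(\mathbf{f}_c)$ is a locally eventually onto hyperbolic repeller for every $c \in H_\beta$.

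First, I would handle the two cases in parallel by verifying that $c \in H_\beta$. If $c$ belongs to the complement of $\mathcal{M}_{\beta,0}$, this is immediate from the containment $\mathbb{C} \setminus \mathcal{M}_{\beta,0} \subset H_\beta$. If instead $c$ is sufficiently close to a simple centre $a$, then since $H_\beta$ is open and contains $a$, we may choose the neighbourhood of $a$ small enough to lie in $H_\beta$; this is precisely how the phrase ``sufficiently close'' should be interpreted.

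Once $c \in H_\beta$ is established, $J(\mathbf{f}_c)$ satisfies the standing hypothesis of Theorem~\ref{bjdq}: it is a locally eventually onto hyperbolic repeller. The Bowen parameter $t_c$, defined as the unique zero of the pressure function associated with the lifted expanding map $f_c\colon J(f_c) \to J(f_c)$ on $\mathbb{C}^2$, is therefore well defined. Applying Theorem~\ref{bjdq} directly yields
\[
\dim_H J(\mathbf{f}_c) \leq t_c,
\]
which is the desired inequality.

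There is essentially no technical obstacle here: the result is a straightforward combination of Theorem~A with the openness and hyperbolicity statements proved in \cite{Rigidity}. The only mildly delicate point is making sure that the reader sees the openness of $H_\beta$ as the precise meaning of ``sufficiently close to a simple centre''; the real mathematical content has already been packaged into Theorem~\ref{bjdq} and the cited results of \cite{Rigidity}.
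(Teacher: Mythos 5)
Your proposal is correct and follows essentially the same route as the paper: the paper proves this statement via two separate corollaries (Corollaries \ref{abc} and \ref{abcxz}), each of which invokes Theorem 5.4 of \cite{Rigidity} to check that $J(\mathbf{f}_c)$ is a LEO hyperbolic repeller and then applies Theorem \ref{kfieg}. Your unification of the two cases through the open set $H_\beta$ is a minor cosmetic streamlining of exactly that argument and introduces no new ideas or gaps.
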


\section{Hyperbolic sets}  \label{qpdkg}

Consider the  holomorphic correspondence \eqref{woekg}, i.e., the relation $z\mapsto w$ determined by the polynomial equation $(w-c)^{q}=z^p.$ This correspondence shall be denoted by $\mathbf{f}_c.$ The integers $p> q \geq 2$ are fixed. {Therefore, $(\mathbf{f}_c)_{c\in \mathbb{C}}$ is a oneparameter family of holomorphic correspondences.  } 

\subsection{Preliminary definitions}  {If $z$ and $w$ are related by \eqref{woekg}, then we say that $w$ is an \emph{image} of $z$ and write $z \mapsto w.$}   Every sequence $(z_i)_0^{\infty}$ of the plane where $z_{i+1}$ is an image of $z_i$ is an \emph{orbit of the correspondence.} An orbit is a \emph{cycle of period $n$} if $z_n=z_0,$  and $z_i \neq z_0$ if $0<i<n.$ 

The set of images of a point $z$ under the correspondence \eqref{woekg}  is denoted by $\mathbf{f}_{c}(z).$ Similarly, $\mathbf{f}_c(A)$ is the union of all $\mathbf{f}_c(z)$ when $z$ belongs $A.$ The inverse image set $\mathbf{f}_{c}^{-1}(A)$ is defined by the set of all $z$  which has at least one image in $A.$

 If $z_1$ is an image of $z_0$ under the correspondence \eqref{woekg} and $z_0\neq 0,$ then  there exists a unique univalent function $w=g(z)$ defined on a neighbourhood $U$ of $z_0$,  implicitly defined by the equation  \eqref{woekg}  and $g(z_0) =z_1.$  If $(z_i)_0^{n}$ is a finite orbit {not containing zero,} we denote \begin{equation}\label{gopihqe} g_{z_0, z_1}(z) = g(z)   \ \textrm{and} \   g_{z_0,z_1,\ldots, z_n}(z) = (g_{z_{n-1}, z_{n}} \circ \cdots \circ g_{z_0,z_1})(z). \end{equation} The domain of $g_{z_0,z_1,\ldots, z_n}$ is  an unspecified neighbourhood of $z_0.$ A cycle $(z_i)_0^{n}$  with period $n$ is \emph{repelling}  if the absolute value of the derivative of $g_{z_0, z_1, \ldots, z_n}$ at $z_0$ is strictly greater than $1.$  
 Therefore, no point of a repelling cycle is allowed to be zero.

\begin{defi}[Julia set] The closure of the union of all repelling cycles is the \emph{Julia set} $J(\mathbf{f}_c)$ of the correspondence $\mathbf{f}_c.$ 
\end{defi}


\begin{defi}[Hyperbolic repeller]\label{gjege} A compact set $\Lambda \subset \mathbb{C}^*$ is a {\emph{hyperbolic repeller}} if $\mathbf{f}_c^{-1}(\Lambda)=\Lambda$ and there exist $C>0$ and $\lambda >1$ such that \begin{equation} \label{gjcmwe}|g_{z_0\cdots z_n}'(z_0)| \geq C\lambda^n, \end{equation} for every finite orbit $(z_i)_0^{n}$ contained in $\Lambda.$  \end{defi}

Since every hyperbolic repeller $\Lambda$  is backward invariant, its complement is forward invariant.  { We define   $B_{r}(a) =\{z\in \mathbb{C}: |z-a|<r\}.$ }

\begin{lem}[Koebe's distortion] \label{dgrwbff} Let $\mathcal{F}$ denote the family of all univalent functions $\varphi:B_r(a) \to \mathbb{C},$ where $r>0$ and $a\in \mathbb{C}$ are allowed to vary arbitrarily.  
{There exists a constant $K>0$  such that, for any $\varphi$ in  $\mathcal{F}$,} we have

\begin{equation}\label{igdw} K^{-1} |\varphi'(a)| \cdot |z-w|\leq {|\varphi(z)-\varphi(w)|} \leq K |\varphi'(a)| \cdot |z-w|, \end{equation}
whenever $z$ and $w$ belong to $B_{r/4}(a),$ where $r$ is the radius of the domain $B_{r}(a)$ of $\varphi.$ (Note that $K$ depends neither on $r$ nor $a$).

\end{lem}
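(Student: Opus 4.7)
The plan is to reduce the statement to the classical Koebe distortion and growth theorems on the unit disk $\mathbb{D}$ via an affine normalization, and then to prove the two inequalities by different means: the upper one by integrating along a segment, the lower one by a subdisk renormalization trick.

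First I would fix $\varphi \in \mathcal{F}$ with domain $B_r(a)$ and define $\psi: \mathbb{D} \to \mathbb{C}$ by $\psi(\zeta) = (\varphi(a + r\zeta) - \varphi(a))/(r\varphi'(a))$. Then $\psi$ is univalent, $\psi(0)=0$ and $\psi'(0)=1$. The change of variable $z = a + r\zeta$ sends $B_{r/4}(a)$ bijectively onto $B_{1/4}(0)$, and both sides of \eqref{igdw} scale consistently, so it suffices to exhibit a universal constant $K$ with $K^{-1}|\zeta_1 - \zeta_2| \leq |\psi(\zeta_1) - \psi(\zeta_2)| \leq K|\zeta_1 - \zeta_2|$ for all $\zeta_1, \zeta_2 \in B_{1/4}(0)$ and all such $\psi$. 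This reduction is the whole point: the constant obtained below depends neither on $r$ nor on $a$.

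For the upper inequality, the classical distortion bound $|\psi'(\zeta)| \leq (1+|\zeta|)/(1-|\zeta|)^3$ gives a universal upper bound $c_2$ on $|\psi'|$ over $\overline{B_{1/4}(0)}$. Since $B_{1/4}(0)$ is convex, integrating $\psi'$ along the segment from $\zeta_2$ to $\zeta_1$ yields $|\psi(\zeta_1) - \psi(\zeta_2)| \leq c_2 |\zeta_1 - \zeta_2|$.

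The lower inequality is the delicate step, because a pointwise lower bound on $|\psi'|$ does not, by itself, yield a lower bound on $|\psi(\zeta_1) - \psi(\zeta_2)|$: the integrand $\psi'$ may rotate and cancel along the segment. To circumvent this I would fix $\zeta_2 \in B_{1/4}(0)$, note that $B_{3/4}(\zeta_2) \subset \mathbb{D}$, and pass to the renormalized map $\tilde G(\tau) = (\psi(\zeta_2 + (3/4)\tau) - \psi(\zeta_2))/((3/4)\psi'(\zeta_2))$. Then $\tilde G$ is univalent on $\mathbb{D}$ with $\tilde G(0)=0$ and $\tilde G'(0)=1$, so the classical Koebe growth bound gives $|\tilde G(\tau)| \geq |\tau|/(1+|\tau|)^2$. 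Setting $\tau = (4/3)(\zeta_1 - \zeta_2)$ (which satisfies $|\tau| \leq 2/3$) and combining with the universal lower bound $|\psi'(\zeta_2)| \geq c_1 = (3/4)/(5/4)^3$ coming from the distortion theorem produces a lower inequality of the required form. Setting $K$ equal to the maximum of the two explicit constants completes the proof. The main obstacle is precisely this lower bound, which is why the subdisk renormalization is needed in place of a direct integration argument.
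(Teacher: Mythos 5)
The paper states this lemma without proof, treating it as a standard form of the Koebe distortion theorem, so there is no in-text argument to compare against. Your proof is correct and follows the standard route. The affine normalization $\psi(\zeta)=(\varphi(a+r\zeta)-\varphi(a))/(r\varphi'(a))$ reduces the statement to a uniform bi-Lipschitz estimate for maps in the schlicht class $S$ restricted to $B_{1/4}(0)$, and automatically makes the constant independent of $r$ and $a$. The upper inequality is routine: the distortion bound $|\psi'(\zeta)|\leq(1+|\zeta|)/(1-|\zeta|)^3$ plus convexity of $B_{1/4}(0)$ and integration along the segment. You correctly flag the lower inequality as the delicate point --- a pointwise lower bound on $|\psi'|$ does not survive integration, since the argument of $\psi'$ may rotate --- and resolve it with the standard recentring trick: the map $\tilde G(\tau)=(\psi(\zeta_2+\tfrac34\tau)-\psi(\zeta_2))/(\tfrac34\psi'(\zeta_2))$ lies in $S$ because $B_{3/4}(\zeta_2)\subset\mathbb{D}$ when $|\zeta_2|<1/4$, and the Koebe growth bound $|\tilde G(\tau)|\geq|\tau|/(1+|\tau|)^2$ applies. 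The arithmetic checks: with $\tau=\tfrac43(\zeta_1-\zeta_2)$ one has $|\tau|<\tfrac23$, so $(1+|\tau|)^2<25/9$, and the distortion lower bound gives $|\psi'(\zeta_2)|\geq(3/4)/(5/4)^3$, yielding an explicit lower constant. One small phrasing slip at the end: $K$ should be taken as the larger of the upper constant and the \emph{reciprocal} of the lower constant, not the maximum of the two constants themselves; but this is clearly what you intend.
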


\begin{defi}[Expansive constant for correspondences] \normalfont Let $\omega \in \mathbb{N}_{0}\cup \{\infty\}.$ {A pair of orbits $(z_i)_0^{\omega}$ and $(w_i)_0^\omega$ is said to be $\epsilon$-close if $|z_i -w_i|< \epsilon$ for $0\leq i \leq \omega.$ } We say that $\epsilon>0$ is an \emph{expansive constant} for a hyperbolic repeller $\Lambda$ if every pair of $\epsilon$-close orbits $(z_i)_0^\infty$ and $(w_i)_0^\infty$ {in} $\Lambda$  must actually coincide. 
\end{defi}

\begin{lem}\label{gjecb} Every hyperbolic repeller $\Lambda$ of $\mathbf{f}_c$ has an expansive constant $\epsilon>0$ satisfying the following property: if {$(z_i)_0^n$} and $(w_i)_0^n$ is a pair of finite $\epsilon$-close orbits contained in $\Lambda,$ then both maps $(g_{z_0 \cdots z_n})^{-1}$ and $(g_{w_0 \cdots w_n})^{-1}$ are well defined and  coincide on $B_{\epsilon}(z_n).$  In particular, $(g_{z_0 \cdots z_n})^{-1}$  maps $w_n$ to $w_0.$ 
\end{lem}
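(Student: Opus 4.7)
The plan is to combine compactness of $\Lambda$, Koebe distortion, and a separation argument on the $p$ local preimage branches of $\mathbf{f}_c^{-1}$. First I would observe that $c \notin \Lambda$: since the correspondence has $\mathbf{f}_c^{-1}(\{c\}) = \{0\}$ and $0 \notin \Lambda$, backward invariance of $\Lambda$ forces $c \notin \Lambda$. Thus $\Lambda$ is a compact subset of $\mathbb{C} \setminus \{0, c\}$, and from this I extract three uniform geometric constants. (i) A radius $r > 0$ such that for every $z \in \Lambda$ the $p$ local inverse branches of $\mathbf{f}_c$ at $z$ are univalent on $B_r(z)$. (ii) A separation $\mu > 0$ such that distinct preimages of any $z \in \Lambda$ lie at distance at least $\mu$ apart; this is immediate from $(w-c)^q = z^p$, since the $p$ preimages are of the form $\alpha \zeta_p^k$ with $|\alpha| = |z-c|^{q/p}$ bounded below on $\Lambda$. (iii) After possibly shrinking $r$, one may further require that each inverse branch carries $B_r(z)$ to a set of diameter less than $\mu/4$.

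The second step is the standard hyperbolic pullback. Using the expansion $|g'_{z_0 \cdots z_n}(z_0)| \geq C\lambda^n$ from Definition \ref{gjege} together with Lemma \ref{dgrwbff}, I would produce a radius $r_0 \in (0, r/4)$, uniform in $n$, such that for every finite orbit $(z_i)_0^n \subset \Lambda$ the inverse composition $(g_{z_0 \cdots z_n})^{-1}$ extends univalently to $B_{r_0}(z_n)$. The idea is telescoping: pulling back a sufficiently small disk at $z_n$, the Koebe distortion at each intermediate stage is controlled by a universal constant, while hyperbolic contraction ensures the pulled-back disks remain inside $B_{r/4}(z_k)$, where the one-step inverse branch provided by (i) is defined. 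I then set $\epsilon = r_0/2$. For any pair of $\epsilon$-close orbits $(z_i)_0^n$ and $(w_i)_0^n$ in $\Lambda$, both inverse compositions $(g_{z_0 \cdots z_n})^{-1}$ and $(g_{w_0 \cdots w_n})^{-1}$ are then univalent on the common disk $B_\epsilon(z_n) \subset B_{r_0}(z_n) \cap B_{r_0}(w_n)$.

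The third step identifies the two compositions. For each $k$, the branch $(g_{z_{k-1}, z_k})^{-1}$ restricted to $B_\epsilon(z_k)$ carries $w_k$ to a point within $\mu/4$ of $z_{k-1}$ by (iii), and hence within $\mu/4 + \epsilon < \mu/2$ of $w_{k-1}$. Since $w_{k-1}$ is itself a preimage of $w_k$ and since every other preimage of $w_k$ is at distance at least $\mu > \mu/2$ from $w_{k-1}$ by (ii), the point $(g_{z_{k-1}, z_k})^{-1}(w_k)$ must coincide with $w_{k-1}$. Composing these equalities in $k$ yields $(g_{z_0 \cdots z_n})^{-1}(w_n) = w_0$. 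Now both compositions are univalent branches of the multivalued iterate $\mathbf{f}_c^{-n}$ on the connected disk $B_\epsilon(z_n)$, and they agree at the single point $w_n$; the uniqueness of local analytic solutions of the algebraic equation defining the correspondence then forces them to agree on all of $B_\epsilon(z_n)$. Expansiveness of $\epsilon$ comes for free: if two $\epsilon$-close infinite orbits existed, the Koebe estimate applied to $(g_{z_0 \cdots z_n})^{-1}$ would give $|w_0 - z_0| \leq K\epsilon / (C\lambda^n) \to 0$, forcing $w_0 = z_0$ and, by shifting, $w_i = z_i$ for every $i$.

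The principal obstacle I anticipate is in the second step: making the uniform pullback radius $r_0$ fully rigorous. One must simultaneously balance the Koebe distortion constant against the one-step radius $r$, the contraction rate $C\lambda^n$, and verify that at every intermediate $k$ the pulled-back disk still lies inside $B_{r/4}(z_k)$ so that the next inverse branch provided by (i) is indeed applicable. Once this calibration is in place, the separation argument of step three and the analytic-continuation step for agreement on $B_\epsilon(z_n)$ are essentially formal.
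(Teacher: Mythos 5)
Your overall strategy matches the paper's: establish uniform one-step geometric control, extend to arbitrary $n$ via Koebe plus hyperbolicity, identify the two inverse compositions by a separation argument, and derive expansiveness from the Koebe estimate at the end. The main stylistic difference is that you separate the $p$ backward branches of $\mathbf{f}_c^{-1}$ using the explicit algebraic form of the preimages (which is clean and more explicit), whereas the paper works with the $q$ forward branches $g_i$ and gets separation and uniformity from compactness of the orbit space $X_{n,c}$ (Claims A and B). Your observation that $c\notin\Lambda$ is a nice point the paper does not spell out but implicitly needs.

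The genuine gap is precisely where you flagged it, in Step 2, and I do not think the single-step ``telescoping'' you describe can be calibrated to work as written. The one-step pullback $h_k=(g_{z_{k-1},z_k})^{-1}$ satisfies $|h_k'(z_k)|\le (C\lambda)^{-1}$, and Koebe then controls the image of $B_{r_0}(z_k)$ by a ball of radius roughly $K(C\lambda)^{-1}r_0$ around $z_{k-1}$. When $C\lambda<K$ this is not a contraction, so there is no uniform $r_0$ for which the pulled-back disks are guaranteed to sit inside $B_{r/4}(z_{k-1})$ step by step; the property (iii) you introduced bounds the image diameter by the \emph{absolute} quantity $\mu/4$, not by a fixed fraction of $r$, so it does not rescue the chain either. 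The paper resolves this exactly by blocking (its Claim C): pick a fixed $m$ with $KC^{-1}\lambda^{-m}<1/2$, use compactness of the space of $m$-orbits to get a uniform radius $\epsilon'$ for $m$-step pullbacks (its Claim B), and then chain $m$-blocks, each of which is now a genuine $1/2$-contraction. Your proof needs this intermediate ``uniformity at a fixed finite depth'' ingredient before the telescoping is legitimate. A secondary, smaller point: at the end you identify $\phi_1$ and $\phi_2$ by ``uniqueness of local analytic solutions'' given agreement at the single point $w_n$; it is cleaner (and is what your own Step 3 already supplies) to note that each single-step branch $h_k$ and $h_k'$ sends $w_k$ to the same preimage $w_{k-1}$ and hence, by your separation estimate, they are literally the same inverse branch near $w_k$, after which the compositions agree by construction.
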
 

\begin{proof} 

Let $X_{n,c}$ denote the space of all finite orbits $(z_i)_0^n$  contained in $\Lambda.$ We know that $X_{n,c}$ is a closed subset of the product space $\Lambda^n$, and therefore $X_n,c$ is a compact metric space with the induced metric from $\Lambda^n$. (We have used this fact repeatedly  in \cite{SS17}; see, for example,  the beginning of \cite[proof of Lemma 2.3]{SS17}).

The proof is divided into four steps.

\noindent  {\bf Claim A:} {\it  given a finite orbit $(z_i)_0^n \in X_{n,c}$ there exist $\epsilon>0$ and $\rho$ with $\epsilon <\rho$ such that for any $(w_i)_0^n$ in the open set $V_0 \subset X_{n,c}$ consisting of all finite orbits which are $\epsilon$-close to $(z_i)_0^n,$ we have:

 $(i)$ the univalent branches $\varphi=g_{z_0 \cdots z_n}$ and $\psi=g_{w_0 \cdots w_n}$ coincide on $B_{\rho}(z_0)$; and 
 
 $(ii)$ the image of $B_{\rho}(z_0)$ under $\varphi$  contains $B_{\epsilon}(z_n).$  Consequently, $\varphi^{-1}$ and $\psi^{-1}$ coincide on $B_{\epsilon}(z_n)$ and both send $w_n$ to $w_0.$ }

 Let us prove the case $n=1.$   Recall that if $z_0\neq 0$, then there exist $q$ univalent branches $g_i: U \to \mathbb{C}$ defined on a neighbourhood of $z_0$ such that $\mathbf{f}_c(U)$ is completely determined by the union of all $g_i(U).$ We may assume, without loss of generality, that for some $\delta>0,$ the diameter $|g_i(U)|$ is less then $\delta$ and $|g_i(z_0) - g_j(z_0)| > 9\delta$, whenever $i \neq j.$ (Roughly speaking, the sets $g_i(U)$ are very small and away from each other).  Hence, for some $0<\epsilon_1< \delta$,  if $w_0 \mapsto w_1$ is $\epsilon_1$-close to $z_0 \mapsto z_1$, it follows that $g_{z_0, z_1}$ and $g_{w_0, w_1}$ must be determined by the same univalent branch $g_{i_0}: U \to \mathbb{C},$ otherwise $z_1$ and $w_1$ would not belong to the same $g_{i_0}(U)$, and consequently, $|z_1-w_1| > \epsilon_1,$ which is impossible, since we have a pair of $\epsilon_1$-close orbits. There exists $\rho>0$ such that $B_{\rho}(z_0) \subset U.$  Hence $g_{z_0, z_1}|_{B_{\rho}(z_0)} = g_{w_0, w_1}|_{B_{\rho}(z_0)}=g_{i_0}|_{B(z_0, \rho)},$ and it follows that $z_1$ belongs to $g_{i_0}(B_{\rho}(z_0)).$ There exists  $\epsilon> 0$  such that $B_{\epsilon}(z_1) \subset g_{i_0}(B_{\rho}(z_0))$, with $\epsilon < \epsilon_1$ and $\epsilon <\rho.$  Then $(i)$ and $(ii)$ of Claim A follow with this choice of $\epsilon$ and $\rho.$ This proves Claim A for  $n=1.$  The general case $n\geq 1$ follows by induction.
  
   Our next step is to eliminate the dependence of $\epsilon$ on $(z_i)_0^n.$ 
 
 \noindent  {\bf Claim B:}  {\it for a fixed $n\geq 1$, there exists $0<\epsilon<1$ such that for any pair of $\epsilon$-close orbits $(z_i)_0^n$ and $(w_i)_0^n$ in $X_{n,c}$,  both maps $(g_{z_0 \cdots z_n})^{-1}$ and $(g_{w_0 \cdots w_n})^{-1}$ are well defined and  coincide on $B_{\epsilon}(z_n).$     }
 
  Using Claim A, we construct a finite  covering $\{V_i\}$ of $X_{n,c}$ and finitely many $ \rho_i > \epsilon_i>0$ for which $(i)$ and $(ii)$ of Claim A hold, whenever $(z_i)_0^n$ and $(w_i)_0^n$ are in $V_i.$ Then chose $0<\epsilon<1$ as being  a Lebesgue number of the covering such that $\epsilon<\epsilon_i$, for every $i$. Any pair of $\epsilon$-close orbits are within the same $V_i;$ by Claim A $(ii)$ the corresponding inverses coincide on $B_{\epsilon_i}(z_n).$ Since $\epsilon<\epsilon_i$, they coincide on $B_{\epsilon}(z_n).$  
 
 (The following $\epsilon$ does not depend on $n$).

\noindent   {\bf Claim C:} {\it there exists  $0<\epsilon<1$ such that, for every $n\geq 1$ and every $(z_i)_0^{n}$ in $X_{n,c}$, the domain of $(g_{z_0 \ldots z_n})^{-1}$ contains $B_{\epsilon}(z_n).$  Moreover, if $(w_i)_0^n$ is $\epsilon$-close to $(z_i)_0^n$ then $(g_{w_0 \cdots w_n})^{-1}$ is defined on $B_{\epsilon}(z_n)$ and coincide with $(g_{z_0 \cdots z_n})^{-1}$ on $B_{\epsilon}(z_n).$    } 

We shall postpone the proof of this claim.

\noindent {\bf Claim D:}  {\it $\epsilon_{0}=\epsilon/4$ is an expansive constant, where $\epsilon$ is given by Claim C. 
}

 { (Therefore, Lemma \ref{gjecb} follows from Claim D with $\epsilon$ replaced by $\epsilon/4.$)}

  {\it Proof of Claim D.} Suppose $(z_i)_0^\infty$ is $\epsilon_{0}$-close to $(w_i)_0^\infty$. We need to show that $z_n=w_n$, for every $n.$ Fix $n$ and let $\{\varphi_m\}$ denote the family of univalent maps $(g_{z_n \cdots z_m})^{-1}$; by Claim C they are all defined on $B_{\epsilon}(z_m),$ for any $m>n$, and each $\varphi_{m}$ maps $w_m$ to $w_n.$  By \eqref{gjcmwe}, the absolute value of the derivative of $(g_{z_n \cdots z_m})^{-1}$ at $z_m$ is at most $C^{-1}\lambda^{-(m-n)}$, where $C>0$ and $\lambda>1$ are the constants of \eqref{gjcmwe}. Using \eqref{gjcmwe} and Koebe's Lemma \ref{dgrwbff} applied to the family of all $\varphi_m:B_{\epsilon}(z_m) \to \mathbb{C}$     we have
  \begin{equation} \label{bjkdfe}
\begin{split}
|z_n -w_n| =|\varphi_m(z_m) - \varphi_{m}(w_m)| & \leq K C^{-1}\lambda^{-(m-n)}|z_m -w_m| \\  & \leq KC^{-1}\lambda^{-(m-n)} 2\epsilon_0 \to 0, 
\end{split}
\end{equation}
  as $m\to \infty.$ Notice that $K$ is the constant of Koebe's Lemma \ref{dgrwbff} and we have used the fact $|z_m -w_m|< \epsilon/4,$ which is fundamental for the application of Lemma \ref{dgrwbff}. We conclude that  $z_n=w_n$, for every $n.$

  {\it Proof of Claim C.}  Let $K$ be the constant of Lemma \ref{dgrwbff}; let $C>0$ and $\lambda>1$ as in  
  \eqref{gjcmwe}. There exists $n>0$ such that $KC^{-1} \lambda^{-n} <1/2.$ Keep $n$ fixed and find a corresponding $\epsilon'>0$ satisfying the properties of Claim B.  Let $k>0$ be an integer. Suppose  $(w_i)_{0}^{nk}$  and $(z_i)_0^{nk}$ are finite orbits contained in $\Lambda$ which are $\epsilon'$-close to one another. By Claim B, for any $0< i \leq k$, the maps $\varphi_i=(g_{z_{n(i-1)} \cdots z_{ni}})^{-1}$ and  $\psi_i=(g_{w_{n(i-1)} \cdots w_{ni}})^{-1}$ are well defined and coincide on $B_{\epsilon'}(z_{ni}).$ Using the same argument of \eqref{bjkdfe} in the proof of Claim D,  we conclude that the maps $\varphi_i$ and $\psi_i$ are well defined contractions by factor $1/2$ on $B_{\epsilon'/4}(z_{ni})$ and $B_{\epsilon'/4}(w_{ni}),$ respectively. Moreover, they define the same inverse branch on  $B_{\epsilon'/4}(z_{ni})$ and  $\varphi_i$ maps $B_{\epsilon'/4}(z_{ni})$  into $B_{\epsilon'/8}(z_{n(i-1)}).$ This ball of radius $\epsilon'/8$ is obviously contained in the domain $B_{\epsilon'}(z_{n(i-1)})$ of $(g_{z_{n(i-2)} \cdots z_{n(i-1)}})^{-1};$ so that we are allowed to perform the compositions:  $$(g_{z_0 \cdots z_{nk}})^{-1}= (g_{z_0 \cdots z_n})^{-1} \circ \cdots  \circ (g_{z_{n(k-1)} \cdots z_{nk}})^{-1},$$       
  $$(g_{w_0 \cdots w_{nk}})^{-1}= (g_{w_0 \cdots w_n})^{-1} \circ \cdots  \circ (g_{w_{n(k-1)} \cdots w_{nk}})^{-1},$$ thereby showing that $(g_{w_0 \cdots w_{nk}})^{-1}$ and  $(g_{z_0 \cdots z_{nk}})^{-1}$ are well defined and coincide on $B_{\epsilon'/4}(z_{nk}).$ Since $k>0$ is arbitrary, Claim C follows with $\epsilon=\epsilon'/4.$

 This completes the proof. \end{proof}

\begin{defi}[LEO hyperbolic repellers] The correspondence $\mathbf{f}_c$ is called \emph{locally eventually onto (LEO)} on a hyperbolic repeller $\Lambda$ if for every  relatively open subset $U$ of $\Lambda$  there exists $n>0$ such that $\mathbf{f}_c^n(U)$ contains $\Lambda. $

\end{defi}
According to Theorem 4.3 of \cite{SS17}, every LEO hyperbolic repeller $\Lambda$ of $\mathbf{f}_c$ is contained in the Julia set $J(\mathbf{f}_c).$

  \begin{defi}[Simple centre]\label{hgdw} The critical point $z=0$ has infinitely many forward orbits under $\mathbf{f}_c.$ If precisely one forward orbit of the critical point of $\mathbf{f}_c$ is periodic and the others diverge to $\infty$, we say that $c$ is a \emph{simple centre.}  
  
  \end{defi}

The following result is stated as Theorem 5.4 in \cite{Rigidity}.

\begin{thmm} If $a$ is a simple centre for the family of holomorphic correspondences $\mathbf{f}_c$, then $J(\mathbf{f}_c)$ is a LEO hyperbolic repeller for every $c$ in a neighbourhood of $a.$

\end{thmm}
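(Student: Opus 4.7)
The plan is threefold: first, establish hyperbolicity of $J(\mathbf{f}_a)$ directly at the simple centre $a$; second, invoke persistence of hyperbolic repellers under perturbation to propagate the property to a neighbourhood of $a$; and third, verify the LEO condition on the perturbed invariant set.

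For the hyperbolic structure at $a$ itself, the key observation is that a simple centre possesses a canonical attracting dynamical picture that forces $J(\mathbf{f}_a)$ to avoid the critical orbits. Let $\mathcal{O}$ denote the unique periodic forward orbit of $0$; because $0$ is the only critical point of $\mathbf{f}_a$, the cycle $\mathcal{O}$ is superattracting. By hypothesis, every other forward orbit of $0$ escapes to $\infty$. A Fatou-type argument then shows that $J(\mathbf{f}_a)$ is compact and lies at positive distance from $\mathcal{O} \cup \{\infty\}$. Consequently, on a uniform neighbourhood of $J(\mathbf{f}_a)$ in $\mathbb{C}^*$, every inverse branch $g_{z_0,z_1}^{-1}$ of $\mathbf{f}_a$ is univalent and takes values in a domain that omits a definite neighbourhood of the critical orbits. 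The families of iterated inverse branches $(g_{z_0\cdots z_n})^{-1}$ along finite orbits in $J(\mathbf{f}_a)$ therefore form a normal family by Montel's theorem. Pairing normality with the density of repelling cycles in $J(\mathbf{f}_a)$, a standard contradiction argument rules out any non-zero limit of $|(g_{z_0\cdots z_n})^{-1}\,'|$ on $J(\mathbf{f}_a)$, since such a limit would furnish an $\mathbf{f}_a$-invariant disc in the complement of $J(\mathbf{f}_a)$, incompatible with the accumulation of repelling cycles. Quantifying this yields the uniform exponential expansion $|(g_{z_0\cdots z_n})'(z_0)| \geq C\lambda^n$ required in Definition \ref{gjege}.

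For the propagation to nearby parameters, the continuation of the hyperbolic repeller follows from structural stability of hyperbolic sets, formulated for correspondences in \cite{SS17, Rigidity}: there is a holomorphic motion $h_c\colon J(\mathbf{f}_a)\to \mathbb{C}^*$ conjugating the inverse-branch dynamics, and the image $\Lambda_c = h_c(J(\mathbf{f}_a))$ is a hyperbolic repeller of $\mathbf{f}_c$ for $c$ near $a$. Repelling cycles of $\mathbf{f}_a$ persist as repelling cycles of $\mathbf{f}_c$, and the density-of-repelling-cycles definition of the Julia set identifies $\Lambda_c$ with $J(\mathbf{f}_c)$. For the LEO property, uniform expansion on $J(\mathbf{f}_c)$ together with density of repelling cycles gives topological mixing: a Lebesgue-number argument based on the expansive constant of Lemma \ref{gjecb} shows that the forward image $\mathbf{f}_c^n(U)$ of any relatively open subset $U$ of $J(\mathbf{f}_c)$ eventually covers the entire Julia set.

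The main obstacle is the normality-contraction argument, which in the correspondence setting demands careful bookkeeping of which univalent branch is selected at each step of an inverse orbit and verification that the domains of the iterated compositions $(g_{z_0\cdots z_n})^{-1}$ do not degenerate. The precise machinery needed is exactly the branch-matching and uniform-domain construction of Lemma \ref{gjecb} (particularly Claim C), which guarantees a uniform radius $\epsilon$ on which iterated inverse branches along any orbit in a hyperbolic repeller are well defined. A secondary, but delicate, point is ensuring that the holomorphic motion of $J(\mathbf{f}_a)$ is genuinely equivariant for the correspondence rather than for a single polynomial branch — this is where the framework of branched holomorphic motions of \cite{Rigidity} is essential.
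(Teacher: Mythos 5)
The paper offers no proof of this statement: it is quoted verbatim as Theorem 5.4 of \cite{Rigidity}, so there is no in-paper argument to compare your attempt against.

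On the attempt itself: the overall plan is reasonable, but it contains a circularity and several asserted-but-unargued steps. The circularity lies in invoking Lemma \ref{gjecb} (in particular Claim C) to control the domains of the iterated inverse branches. That claim is proved under the standing hypothesis that $\Lambda$ is \emph{already} a hyperbolic repeller; its proof explicitly uses $|g_{z_0\cdots z_n}'(z_0)| \geq C\lambda^n$ to obtain the factor-$1/2$ contraction that makes the compositions legitimate. You therefore cannot use its conclusion to bootstrap the very estimate your normality step is meant to produce. Beyond the circularity, the preparatory assertions are the real content and are left unjustified: the ``Fatou-type argument'' that $J(\mathbf{f}_a)$ is compact, misses the origin, and sits at positive distance from the superattracting cycle is precisely what is hard, since a holomorphic correspondence has no ready-made Fatou--Julia classification and the Julia set here is defined only as a closure of repelling cycles; establishing that separation, plus a fixed domain on which all iterated inverse branches are simultaneously defined and normal, is effectively what the cited theorem delivers. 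Likewise, the claim that a non-zero subsequential limit of $(g_{z_0\cdots z_n})^{-1}$ would furnish an $\mathbf{f}_a$-invariant disc in the complement is asserted, not argued; for a correspondence such a limit is only one branch of the multivalued inverse, so the usual rational-map invariance argument does not transfer directly. Finally, passing from uniform expansion and density of repelling cycles to LEO is too quick: LEO is strictly stronger than topological mixing, and the Lebesgue-number step you sketch already presupposes the uniform inverse-branch domains whose construction was the circular point above.
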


We shall now  summarise some results of \cite{SS17}   concerning $\mathbb{C}^2$-extensions.  

\subsection{The $\mathbb{C}^2$-extension} \label{bndhw} Suppose  $J(\mathbf{f}_{c_0})$ is a LEO hyperbolic repeller for the correspondence $\mathbf{f}_{c_0}.$ There exists  a family of holomorphic maps \begin{equation}\label{gjemqe}f_c: V \to \mathbb{C}^2\end{equation} defined on a open subset $V$ of $\mathbb{C}^2$ and parameterised on a neighbourhood $U$ of $c_0,$ such that   the closure of the periodic points of $f_c,$ denoted by $J(f_c)$, is a subset of $\mathbb{C}^2$ which is completely invariant under $f_c.$ (See \cite[Lemma 2.3]{SS17}). Moreover, {$f_c$ is a $p$-to-$1$ map } on $J(f_c).$   The  notation  $J(f_c)$ suggests the definition of a Julia set, or that every periodic point of $f_c$ is repelling.  As a matter of fact, the Jacobian determinant of $f_c^n$ at every periodic point of period $n$ is strictly greater than $1.$ This fact is explained in Remark 2.4 of \cite{SS17} and somehow justifies the notation $J(f_c).$ In spite of this analogy, for technical reasons $J(f_c)$ shall not be referred to as the Julia set of $f_c.$

 The family \eqref{gjemqe} is defined for parameters in a neighbourhood of $c_0$ and {enjoys some important properties.} 

 \begin{itemize} \item[$(A)$]
{There exists} a holomorphic motion $h_c: J(f_{c_0}) \to J(f_c)$  parameterised on $U$ and {based at $c_0$,} given by a family of conjugacies $h_c$ from $J(f_{c_0})$ to $J(f_c).$ {(See Theorems B and C of \cite{SS17}).}
 
\item[$(B)$] The projection $\pi(z,w)=z$ establishes a semiconjugacy $\pi: J(f_c) \to J(\mathbf{f}_c)$ {between $f_c$ and $\mathbf{f}_c$,}  in the sense that $J(\mathbf{f}_c)=\pi J(f_c)$ is also a LEO hyperbolic repeller for $\mathbf{f}_c$ and $\pi f_c(x)$ is an image of $\pi(x)$ under $\mathbf{f}_c,$ for every $x$ in $J(f_c).$  Hence $$\pi(x) \to \pi f(x) \to \pi f^2(x) \to  \cdots$$ is a forward orbit under $\mathbf{f}_c.$ This is, by definition, the \emph{projected orbit of} $x.$ {(See Lemma 2.3 of \cite{SS17}).}
 
 

\end{itemize}

\section{The Bowen parameter}


Recall that a continuous surjective map $f: X \to X $ of a compact metric space is \emph{expanding} if there exists $\ell >1$ such that every point in $X$ has a neighbourhood $V$  such that $f^{-1}(V)$ is a finite union of disjoint open sets $U_j,$ each of which is mapped homeomorphically  onto $V$  and  $$d(f(x), f(y)) > \ell d(x,y)$$  for $x$ and $y$ in $ U_j.$

The LEO property may be defined for every  $f:X \to X.$ It means that every nonempty open set of $X$ is eventually mapped onto $X.$  The following result is stated as Theorem 4.3 in \cite{SS17}.

\begin{thmm}\label{gjegde} {Let $c_0$ be a parameter in $\mathbb{C}$ such that $J(\mathbf{f}_{c_0})$ is a LEO hyperbolic repeller for $\mathbf{f}_{c_0}.$} {For every $c$ in a neighbourhood of $c_0,$}  the map $f_c$ of the family \eqref{gjemqe} {is  LEO on $J(f_c)$ and expanding}  with respect to the metric
$$ d_s(x,y)=  \sum_{n=0}^{\infty} s^{-n}|\pi f_c^n(x) - \pi f_c^n(y) |$$ where $s>1$ is arbitrary  and $x,y$ belong to $J(f_c).$
\end{thmm}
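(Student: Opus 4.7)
The plan is to prove the LEO property and the expanding property separately.

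\textbf{LEO:} By property $(A)$, the holomorphic motion $h_c$ is a topological conjugacy between $f_{c_0}$ on $J(f_{c_0})$ and $f_c$ on $J(f_c)$, so it suffices to prove that $f_{c_0}$ is LEO on $J(f_{c_0})$. Given a nonempty relatively open $U \subset J(f_{c_0})$, I would push it down via the semiconjugacy $\pi$ of property $(B)$, apply the assumed LEO property of $\mathbf{f}_{c_0}$ on $J(\mathbf{f}_{c_0})$, and then lift back. The crucial step is showing that $\pi(U)$ contains a nonempty relatively open subset of $J(\mathbf{f}_{c_0})$; this leans on the solenoidal description of $J(f_{c_0})$ in \cite{SS17}, where locally $J(f_{c_0})$ looks like $J(\mathbf{f}_{c_0})$ times a Cantor set with $\pi$ as the first-factor projection. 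Once this is in hand, choose $N$ with $\mathbf{f}_{c_0}^N\,\pi(U) = J(\mathbf{f}_{c_0})$; the semiconjugacy identity together with the $p$-to-$1$ covering structure of $f_{c_0}$ on $J(f_{c_0})$ then yields $f_{c_0}^{N'}(U) = J(f_{c_0})$ for some $N' \geq N$.

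\textbf{Expansion:} Starting from the direct identity
\[
d_s(f_c(x), f_c(y)) = s\, d_s(x,y) - s\, |\pi(x) - \pi(y)|,
\]
the question reduces to bounding $\alpha(x,y) := |\pi(x)-\pi(y)|/d_s(x,y)$ strictly below $1 - \ell/s$ for pairs $x, y \in J(f_c)$ lying in the same local branch of $f_c$ (equivalently, close in $d_s$). For such pairs the hyperbolic bound $|g'_{z_0 \cdots z_n}(z_0)| \geq C\lambda^n$ from Definition \ref{gjege}, combined with Koebe distortion (Lemma \ref{dgrwbff}) along the projected orbit, gives $|\pi f_c^n(x) - \pi f_c^n(y)| \geq C'\lambda^n |\pi(x)-\pi(y)|$ up to the first time the projected orbits separate by the expansive constant $\epsilon$ of Lemma \ref{gjecb}; beyond that time, expansivity supplies a tail bound independent of $x, y$. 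Substituting into the series defining $d_s(x,y)$ and comparing the head and tail, $\alpha(x,y)$ is bounded uniformly away from $1$, which translates into local expansion by a factor $\ell > 1$ via $\ell = s(1 - \alpha_{\max})$.

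The main obstacle is the LEO step: verifying the openness of $\pi$ at small scales on $J(f_{c_0})$ is a structural fact about the embedding $J(f_{c_0}) \subset \mathbb{C}^2$ that depends essentially on the solenoidal construction of \cite{SS17}. The expanding estimate, by contrast, is a mechanical combination of expansivity, Koebe distortion, and the hyperbolic inequality; the only subtlety is that the expansion factor $\ell$ depends on $s$ and on the hyperbolicity constant $\lambda$, so one may need to restrict to small enough neighborhoods (shrinking the Koebe distortion to unity) in order to cover $s$ close to $1$.
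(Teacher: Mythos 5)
The paper does not supply a proof of this theorem at all: it cites it wholesale from Theorem~4.3 of \cite{SS17} ("The following result is stated as Theorem 4.3 in \cite{SS17}"). So there is no in-paper argument to compare against, and your attempt must be judged on its own.

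Your starting identity for the expansion part is correct and is exactly the right lever:
\[
d_s(f_c x, f_c y) \;=\; s\bigl(d_s(x,y) - |\pi(x)-\pi(y)|\bigr),
\]
so everything reduces to showing that $\alpha(x,y)=|\pi(x)-\pi(y)|/d_s(x,y)$ is bounded strictly below $1$ for $x\neq y$ in the same inverse branch. However, the sentence ``beyond that time, expansivity supplies a tail bound independent of $x,y$'' conceals a genuine $s$-dependence that your sketch does not resolve. Make the attempt precise: let $N=N(x,y)$ be the first time $|\pi f_c^N x - \pi f_c^N y|\ge \epsilon$. The tail contributes at least $s^{-N}\epsilon$ to $d_s(x,y)$, while Koebe plus hyperbolicity gives $|\pi x - \pi y| \le KC^{-1}\lambda^{-N}\cdot\mathrm{diam}\, J(\mathbf{f}_c)$. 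The ratio of the tail contribution to $|\pi x - \pi y|$ is thus bounded below by a constant times $(\lambda/s)^N$. When $s<\lambda$ this is bounded away from $0$ and the argument closes; when $s>\lambda$ it tends to $0$ as $N\to\infty$, so $\alpha\to 1$ and the expansion constant degenerates. In short, the hyperbolicity estimate $|g_{z_0\cdots z_n}'|\ge C\lambda^n$ with $C<1$ does not rule out pointwise contraction $|g'(z_0)|<1$ at individual steps, and for $s$ large the metric $d_s$ becomes dominated by the zeroth coordinate, where that contraction is visible. Your sketch never confronts this. One would need either to restrict to $s<\lambda$, to pass to an adapted metric in which $|g'|>1$ pointwise, or to invoke the specific geometry of the family $z^{p/q}+c$ near $c=0$ (where $|g'|\ge p/q>1$ holds uniformly on $J(\mathbf{f}_c)$) — none of which appears in your proof.

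On the LEO part, the overall plan (reduce to $c_0$ via the holomorphic motion $h_c$, then transfer LEO of $\mathbf{f}_{c_0}$ on $J(\mathbf{f}_{c_0})$ to LEO of $f_{c_0}$ on $J(f_{c_0})$) is reasonable. But the lift-back step — passing from $\mathbf{f}_{c_0}^N\,\pi(U)\supset J(\mathbf{f}_{c_0})$ to $f_{c_0}^{N'}(U)= J(f_{c_0})$ — is left as a slogan (``$p$-to-$1$ covering structure''). The clean way to do this, which the paper itself gestures at in the remark after Theorem~\ref{igdg}, is to use the conjugacy of $f_{c_0}\colon J(f_{c_0})\to J(f_{c_0})$ with the one-sided shift on the space $X_{c_0}$ of forward orbits contained in $J(\mathbf{f}_{c_0})$: a cylinder set whose $n$-th coordinate can be any point of $J(\mathbf{f}_{c_0})$ maps under $\sigma^n$ onto all of $X_{c_0}$, since all admissible forward continuations are then present in $X_{c_0}$ by definition. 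Phrased that way, both the openness of $\pi$ and the lifting become automatic and you need not invoke the solenoidal local product structure at all. As written, your proof leaves both the openness of $\pi$ and the surjectivity-at-time-$N'$ to unstated structure.

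So: the strategy is sensible and the central expansion identity is right, but both halves have real gaps — in particular, the expansion estimate as sketched fails for $s>\lambda$, which is the regime the statement ostensibly allows.
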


The \emph{dynamic ball of radius $\epsilon$, time $n$ and centre $x$} is defined by  
  $$B(x, n, \epsilon)=\{y\in X: d(f^jx, f^jy)< \epsilon, \ 0 \leq j \leq n\}.$$    Every expanding map $f: X \to X$ has an \emph{expansive constant} $\epsilon>0$ characterised by the fact that if  $d(f^nx,f^ny)<\epsilon$ for every $n\geq 0,$ then $x=y.$  (It is clear that any other positive real number $\epsilon_0 < \epsilon$ is also an expansive constant.) 
  
  A continuous map $f: X \to X$ is \emph{topologically mixing}  if for every pair of nonempty open subsets $U$ and $V$ there exists $n_0$ such that $f^n(U)\cap V$ is nonempty, for every $n\geq n_0.$ LEO maps are  topologically mixing.

   Suppose  $f$ is an expanding map of a compact metric space $(X,d)$, and  $\phi: X\to \mathbb{R}$ is a potential (i.e, a real valued continuous function).   {The topological pressure of $\phi$ with respect to the system $f$ is denoted by $P(\phi),$ and $S_n\phi(x)$ denotes a Birkhoff sum.} Since $f$ is expanding, there exists an expansive constant $\epsilon>0.$ If $\mu$ is a probability measure on $X$ and there exists $C_{\epsilon}>0$ such that $$C_{\epsilon}^{-1} \leq \frac{\mu(B(x,n,\epsilon))}{ \exp (S_n\phi(x) - n P(\phi))} \leq C_{\epsilon}, $$ for every $x$ in $X$ and $n>0,$  then $\mu$ is a \emph{Gibbs measure} of $f$ and $\phi.$ (See \cite[Chap. 4]{ConformalF}).

\subsection{Transfer operators.}  
Let $f: X \to X$ be an expanding map of a compact metric space $(X,d).$ The  \emph{transfer operator} $\mathcal{L}_{\phi}: C(X) \to C(X)$ with potential $\phi$ acts on the space $C(X)$ of continuous complex valued functions $g: X \to \mathbb{C},$ and is defined by $$ \mathcal{L}_{\phi}g(x) = \sum_{f(y)=x} e^{\phi(y)} g(y).$$ 

\noindent (See \cite[Chap. 4]{ConformalF}). The iterate $\mathcal{L}_{\phi}^n$ is precisely the transfer operator $\mathcal{L}_{S_n\phi}$ with respect to $f^n:X\to X.$

 The dual operator $\mathcal{L}_{\phi}^*: M(X) \to M(X)$ acts on the space of complex measures defined on the Borel $\sigma$-algebra of $X.$ It is defined by $\langle \mathcal{L}_{\phi}^*\mu, g\rangle = \langle \mu,  \mathcal{L}_{\phi} g\rangle, $ for every $\mu$ in $M(X)$ and $g$ in $C(X),$ where $\langle \mu, g\rangle = \int_X g  d\mu.$ The following theorem summarises some well known results concerning the transfer operator, and  the first three sentences are often referred to as the \emph{Ruelle-Perron-Frobenius theorem} \cite[Chap. 4]{ConformalF}.

\begin{thmm} \label{gjdcs}  Suppose $f: X \to X$ is an expanding and topologically mixing map of a compact metric space $X.$ If $\phi: X \to \mathbb{R}$ is H\"older continuous and $\lambda_{\phi}=\exp P(\phi),$ then:

\begin{itemize}\item[$(a)$] there exists a unique probability measure $\nu$ on $X$ such that  $\mathcal{L}_{\phi}^*(\nu)=\lambda_{\phi} \nu;$ 

\item[$(b)$] there exists a unique real valued continuous function $h>0$ on $X$ such that $\mathcal{L}_{\phi}(h)=\lambda_{\phi} h$ and $\int_X hd\nu=1;$

\item[$(c)$] for any continuous real valued function $g$ on $X,$  $$\lambda_{\phi}^{-n} \mathcal{L}_{\phi}^{n}(g) \to  h \int_X g d\nu,$$ {as $n\to \infty$, }and the convergence is uniform on $X;$  
\item[$(d)$] the measure $\mu(A) = \int_A hd\nu$  is the unique invariant Gibbs measure of $f$ and $\phi.$

\end{itemize}

\end{thmm}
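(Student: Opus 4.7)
The plan is to establish the four parts in the classical Ruelle--Perron--Frobenius order. The linchpin of everything is a \emph{bounded distortion estimate}: using that $f$ is expanding with expansive constant $\epsilon>0$, the inverse branches of $f^n$ on balls $B(x,\epsilon)$ are contractions with ratio at most $\ell^{-n}$, and combining this with H\"older continuity of $\phi$ gives a constant $D>0$ such that
\begin{equation*}
\bigl| S_n\phi(y)-S_n\phi(y') \bigr| \leq D
\end{equation*}
whenever $y,y'$ lie in the same inverse branch of $f^{n}$ over a ball of radius $\epsilon$. I would prove this first, since it controls all subsequent estimates.

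For part $(a)$, I would obtain $\nu$ by a fixed-point argument: the map $\mu \mapsto \mathcal{L}_\phi^*\mu/\langle \mathcal{L}_\phi^*\mu,1\rangle$ is a continuous self-map of the weak-$*$ compact convex set of Borel probability measures on $X$, so by Schauder--Tychonoff it has a fixed point $\nu$ with eigenvalue $\lambda\geq 0$. Bounded distortion lets one compare $\mathcal{L}_\phi^n 1(x)$ across different $x$, and a $(n,\epsilon)$-separated set argument identifies $\lambda$ with $\exp P(\phi)$. For $(b)$, I would consider the Ces\`aro averages $h_n=\tfrac{1}{n}\sum_{k=0}^{n-1}\lambda_\phi^{-k}\mathcal{L}_\phi^k 1$; bounded distortion again implies that $\{\log h_n\}$ is equicontinuous (actually equi-H\"older), while $\int h_n\,d\nu=1$ keeps the sequence bounded away from $0$ and $\infty$. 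Arzel\`a--Ascoli gives a uniform subsequential limit $h>0$ satisfying $\mathcal{L}_\phi h=\lambda_\phi h$, and topological mixing is used to upgrade equicontinuity to positivity throughout $X$ and to rule out other fixed functions.

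For part $(c)$, the hard step, I would use a Birkhoff cone / Hilbert projective metric argument applied to the normalized operator $\widetilde{\mathcal{L}} g= \lambda_\phi^{-1}h^{-1}\mathcal{L}_\phi(hg)$, which preserves the cone of strictly positive H\"older functions and, by topological mixing together with bounded distortion, maps this cone into a subcone of finite projective diameter after finitely many iterations. This gives a uniform contraction in the projective metric, hence the uniform convergence $\lambda_\phi^{-n}\mathcal{L}_\phi^n g \to h\int g\,d\nu$ for continuous $g$ (first for positive ones, then by linearity). Uniqueness of $\nu$ and $h$ also falls out of this contraction.

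For part $(d)$, invariance of $\mu=h\,d\nu$ is a direct duality check:
\begin{equation*}
\int (g\circ f)\, d\mu = \lambda_\phi^{-1}\int \mathcal{L}_\phi\bigl(h\,(g\circ f)\bigr)\,d\nu = \lambda_\phi^{-1}\int g\,\mathcal{L}_\phi h\,d\nu = \int g\,d\mu.
\end{equation*}
The Gibbs property is then the content of bounded distortion itself: $f^n$ is injective on the dynamic ball $B(x,n,\epsilon)$ (by expansivity), its image contains a ball of definite radius around $f^n x$ (by expansion), and the Jacobian-type weight $\exp S_n\phi$ varies by at most $e^{D}$ across this ball, which combined with $\lambda_\phi=e^{P(\phi)}$ and the bounds $0<\min h\leq h\leq \max h<\infty$ gives the two-sided estimate defining a Gibbs measure. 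The main technical obstacle throughout is the projective contraction argument in $(c)$; the other parts amount to careful bookkeeping once bounded distortion is in hand.
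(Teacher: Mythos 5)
The paper does not actually prove this statement: Theorem~\ref{gjdcs} is presented as a summary of ``well known results concerning the transfer operator'' and the Ruelle--Perron--Frobenius theorem, with an explicit citation to \cite[Chap.~4]{ConformalF}. So there is no in-paper proof to compare yours against --- the author deliberately black-boxes it.

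That said, your sketch is a coherent and essentially correct reconstruction of the standard argument in the cited reference and in Liverani-style treatments. Bounded distortion along inverse branches (from expansion plus H\"older continuity of $\phi$) is indeed the linchpin; the Schauder--Tychonoff fixed point for the normalized dual operator gives the conformal measure in $(a)$; Ces\`aro averages plus Arzel\`a--Ascoli give the eigenfunction $h$ in $(b)$; the Birkhoff cone / Hilbert projective metric contraction for the normalized operator $\widetilde{\mathcal{L}}g=\lambda_\phi^{-1}h^{-1}\mathcal{L}_\phi(hg)$ is a standard way to obtain the uniform convergence and the uniqueness claims in $(c)$; and your duality computation together with bounded distortion correctly yields invariance and the Gibbs property in $(d)$. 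Two small points worth tightening if you wanted a complete proof: $(i)$ the identification $\lambda=\exp P(\phi)$ in part $(a)$ needs the variational principle or the $(n,\epsilon)$-spanning/separated characterization of pressure, which you gesture at but should spell out; and $(ii)$ in $(c)$ the projective contraction is usually established first on a cone of H\"older (not merely continuous) functions, and the extension to all continuous $g$ is by density together with the uniform bound $\|\lambda_\phi^{-n}\mathcal{L}_\phi^n\|_\infty\leq\mathrm{const}$ --- worth stating explicitly so the passage from positive H\"older $g$ to arbitrary continuous $g$ is justified.
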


\begin{remark} \label{qodwe}\normalfont
The number $P(0)$ is also known as the topological entropy of $f.$ 
Suppose that $f: X\to X$ is a $d$-to-$1$ map (i.e., every point has exactly $d$ preimages). Then $f^n$ is $d^n$-to-$1.$ By   Theorem \ref{gjdcs},  $$\lambda_{\phi}^{-n}\mathcal{L}_0^n(1) = \lambda_{\phi}^{-n} \sum_{f^n(y)=x} e^{S_n \phi(y)} \cdot 1 = \lambda_{\phi}^{-n}d^n$$ converges to $h>0$, where $\phi=0.$ It follows that $\lambda_{\phi}= d,$ and the topological entropy of $f$ must be $\log(d).$
\end{remark}

The following result introduces the Bowen parameter $t_c.$ 

Let $z_0(x)$ and $z_1(x)$ denote the first two elements of the projected orbit of an element $x$ of  $J(f_c).$  We define $\varphi_c: J(f_c)  \to \mathbb{R}$ by   $ \varphi_c(x)= -\log|g_{z_0, z_1}'(z_0)|,$ where $z_0$ and $z_1$ are $z_0(x)$ and $z_1(x),$ respectively.   The map $\varphi_c$ is therefore a potential on $J(f_c)$, and  the zero of the pressure function $t\mapsto P(t\varphi_c) $  is defined as the \emph{Bowen parameter $t_c.$} However, we need to show that there exists only one zero of the pressure function. This is established in the following theorem. Note that $\varphi_c(x)$ is defined directly by  \eqref{lkmfg}, for then  $g_{z_0, z_1}'(z_0)$ equals $p(z_1 -z_0)/(qz_0).$

\begin{thmm}[Bowen parameter] \label{igdg}   If the Julia set  $J(\mathbf{f}_c) \subset \mathbb{C}$ of the holomorphic correspondence $(w-c)^q=z^p$ is a LEO hyperbolic repeller, then the topological entropy of the $\mathbb{C}^2$-extension $$f_c: J(f_c) \to J(f_c)$$ is strictly positive. Moreover, the   potential  
\begin{equation} \label{lkmfg}\varphi_{c}(x) = - \log|p(z_1 -z_0)/(qz_0)|\end{equation}   is  H\"older continuous with respect to the metric $d_s$ on $J(f_c),$ and   there exists a unique zero $t_c>0$ of the pressure function $t\mapsto P(t\varphi_c)$ defined on $[0, \infty).$ 
\end{thmm}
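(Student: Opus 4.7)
The plan is to address the three assertions in turn, using Theorem~\ref{gjegde}, Remark~\ref{qodwe}, and the Ruelle--Perron--Frobenius Theorem~\ref{gjdcs}. For positive entropy: Section~\ref{bndhw} notes that $f_c : J(f_c) \to J(f_c)$ is $p$-to-one, and since LEO implies topologically mixing, Remark~\ref{qodwe} applies and gives that the topological entropy $P(0) = \log p$, which is strictly positive because $p \geq 3$.

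For H\"older continuity of $\varphi_c$: implicit differentiation of $(w-c)^q = z^p$ together with $(z_1 - c)^q = z_0^p$ yields $g_{z_0,z_1}'(z_0) = p(z_1 - c)/(q z_0)$. On the compact set $J(\mathbf{f}_c) \subset \mathbb{C}^*$, $|z_0|$ is bounded below, and by the $n = 1$ instance of \eqref{gjcmwe} so is $|g_{z_0, z_1}'(z_0)|$; hence the image of the map $(z_0, z_1) \mapsto p(z_1 - c)/(q z_0)$ lies in a compact subset of $\mathbb{C}^*$, on which $-\log|\cdot|$ is smooth. Since the projections $x \mapsto z_0(x) = \pi(x)$ and $x \mapsto z_1(x) = \pi f_c(x)$ are Lipschitz from $(J(f_c), d_s)$ into $\mathbb{C}$ (with constants $1$ and $s$ respectively, read off directly from the definition of $d_s$), the composition $\varphi_c$ is Lipschitz in $d_s$, hence H\"older.

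For the unique zero: the essential input is that every $f_c$-invariant Borel probability measure $\mu$ on $J(f_c)$ satisfies $\int \varphi_c \, d\mu \leq -\log \lambda < 0$. This follows from \eqref{gjcmwe}: by the chain rule $S_n \varphi_c(x) = -\log|g_{z_0 \cdots z_n}'(z_0)| \leq -\log C - n \log \lambda$, and integrating with $\int S_n \varphi_c \, d\mu = n \int \varphi_c \, d\mu$ (by $f_c$-invariance), dividing by $n$, and letting $n \to \infty$ gives the claim. For each $t \geq 0$ the potential $t\varphi_c$ is H\"older, so Theorem~\ref{gjdcs} furnishes an equilibrium state $\mu_t$; for $0 \leq t_1 < t_2$ the variational principle yields
\[
P(t_1 \varphi_c) \;\geq\; h_{\mu_{t_2}}(f_c) + t_1 \!\int\! \varphi_c \, d\mu_{t_2} \;=\; P(t_2 \varphi_c) + (t_2 - t_1)\Bigl(-\!\int \varphi_c \, d\mu_{t_2}\Bigr) \;>\; P(t_2 \varphi_c),
\]
so $t \mapsto P(t\varphi_c)$ is convex, continuous, and strictly decreasing on $[0, \infty)$. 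Combined with $P(0) = \log p > 0$ and the bound $P(t\varphi_c) \leq \log p - t \log \lambda \to -\infty$, the intermediate value theorem produces a unique $t_c > 0$ with $P(t_c \varphi_c) = 0$. The main technical point is the negative upper bound on $\int \varphi_c \, d\mu$, where the uniform hyperbolic estimate \eqref{gjcmwe} is indispensable; the remainder is a routine assembly of standard pressure theory for H\"older potentials on expanding, topologically mixing systems.
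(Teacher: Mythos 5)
Your argument is correct, and on the key point — showing the pressure function $t \mapsto P(t\varphi_c)$ is strictly decreasing and tends to $-\infty$ — you take a genuinely different route from the paper. The paper works directly from Bowen's covering formula \eqref{eidhg}, inserting the bound $\exp S_n(t\varphi_c)(x) = |g'_{z_0\cdots z_n}(z_0)|^{-t} \leq C^{-t}\lambda^{-nt}$ into the infimum over covers to obtain $P((t+\tau)\varphi_c) \leq -t\log\lambda + P(\tau\varphi_c)$, which gives strict decrease and divergence in one stroke without ever mentioning invariant measures. You instead prove the uniform Lyapunov bound $\int\varphi_c\,d\mu \leq -\log\lambda$ for every invariant $\mu$ (via Birkhoff sums and \eqref{gjcmwe}) and then feed it through the variational principle, using equilibrium states $\mu_{t}$ to compare $P(t_1\varphi_c)$ and $P(t_2\varphi_c)$. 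This is entirely valid and arguably more conceptual, but it invokes machinery (the variational principle, and the fact that the Gibbs measure of Theorem~\ref{gjdcs}(d) is the equilibrium state) that the paper never uses or states; the paper's covering-formula computation is more self-contained given the tools it has set up. Note also that you do not actually need the equilibrium states: once you have $\int\varphi_c\,d\mu \leq -\log\lambda$ for \emph{all} invariant $\mu$, taking the supremum in $h_\mu + t_1\int\varphi_c\,d\mu \geq h_\mu + t_2\int\varphi_c\,d\mu + (t_2-t_1)\log\lambda$ gives $P(t_1\varphi_c) \geq P(t_2\varphi_c) + (t_2-t_1)\log\lambda$ directly. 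Your treatment of the entropy (via Remark~\ref{qodwe}) and of H\"older continuity (Lipschitz projections composed with a smooth map on a compact subset of $\mathbb{C}^*$) matches the paper's in substance; the paper compresses the latter to ``a straightforward application of the mean value theorem,'' which is the same idea. Finally, you correctly computed $g'_{z_0,z_1}(z_0) = p(z_1 - c)/(qz_0)$; the formula $p(z_1 - z_0)/(qz_0)$ appearing in the paper's \eqref{lkmfg} and the surrounding remark is a typo (it is inconsistent with \eqref{nbfwrh}, which requires $\varphi_c = -\log|g'_{z_0,z_1}(z_0)|$), so the discrepancy between your formula and the paper's is to your credit, not a flaw in your proof.
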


\begin{proof} According to  \cite[Chap. 2B]{Bowen1979},  the topological pressure of a potential $\phi$ can be calculated by \begin{equation} \label{eidhg} P(\phi) = \lim_{|\mathcal{C}|\to 0} \lim_{n\to \infty} \frac{1}{n} \log (\inf_{\mathcal{C}_n} \sum_{U\in \mathcal{C}_n} \exp {S_n\phi(U))},\end{equation}

\noindent where $\mathcal{C}$ is any finite covering of $J(f_c);$  $|\mathcal{C}|$ is the greatest diameter of an element of $\mathcal{C};$ and  $S_n\varphi(U)$ is given by the supremum of all $S_n\phi(x)$ when $x\in U.$  The infimum in  \eqref{eidhg} is taken over all possible covers $\mathcal{C}_n$ of $J(f_c)$ whose elements  can be written as $$U=\left \{x\in J(f_c): f_c^{j}(x) \in U_j, \ 0\leq j \leq n\right \},$$ for some finite sequence  $(U_j)_1^n$ of elements of $\mathcal{C}.$

Since $t\mapsto  P(t\varphi_c)$ is continuous, in order to prove the existence of a unique zero it suffices to show that the pressure function  is strictly decreasing with $P(0) >0$ and $P(t\varphi_c) \to -\infty$ as $t\to \infty.$   


\noindent Indeed,  $J(\mathbf{f}_c)$ is a hyperbolic repeller, and therefore $|g_{z_0 \cdots z_n}'(z_0)| \geq C\lambda^n$, where $\lambda>1$ and   $(z_j)_0^{\infty}$ denotes the projected orbit of any $x.$ By Theorem \ref{igdg} and a  simple computation involving the logarithm, 
\begin{equation} \label{nbfwrh}\exp S_n (t\varphi_c)(x) = |g_{z_0\cdots z_n}'(z_0)|^{-t}.\end{equation} Hence,
 \begin{align*} P((t+\tau)\varphi_c) & = \lim_{|\mathcal{C}|\to 0} \lim_{n\to \infty} \frac{1}{n} \log (\inf_{\mathcal{C}_n} \sum_{U\in \mathcal{C}_n} \exp S_n(t\varphi_c)(U) \cdot \exp S_n(\tau \varphi_c)(U)) \\ 
& = \lim_{|\mathcal{C}|\to 0} \lim_{n\to \infty} \frac{1}{n} \log ( \inf_{\mathcal{C}_n} \sum_{U\in \mathcal{C}_n} \sup_{x\in U} |g_{z_0 \cdots z_n}'(z_0)|^{-t} \cdot \exp S_n(\tau \varphi_c)(U)) \\
& \leq  \lim_{|\mathcal{C}|\to 0} \lim_{n\to \infty} \frac{1}{n} \log ( C^{-t} \lambda^{-n t}   \inf_{\mathcal{C}_n} \sum_{U\in \mathcal{C}_n}  \exp S_n(\tau \varphi_c)(U))   \\
&= - t\log(\lambda) + P(\tau \varphi_c). \end{align*}

\noindent Since $\log(\lambda) >0$, we conclude that the pressure function is indeed strictly decreasing and $P(t\varphi_c) \to -\infty$ as $t \to \infty.$  Since $f_c$ is a $p$-to-one map, it follows from Remark \ref{qodwe} that  $P(0) = \log(p)>0.$   Hence, there exists a unique zero $t_c$ of the pressure function in $(0,\infty).$   

   The proof of the H\"older continuity of the potential with respect to the metric $d_s$ is a straightforward application of the mean value theorem. 
\end{proof}

\begin{remark} \normalfont {\it The value of $t_c$ does not depend on the particular choice of $f_c.$} 

{In fact,} if we consider the space $X_c$ of all forward orbits $(z_i)_0^\infty$ under $\mathbf{f}_c$ which are contained in  $J(\mathbf{f}_c),$  then $g_c(x)= (\pi f_c^n(x))_{n=0}^{\infty}$ defines a homeomorphism from $J(f_c)$ onto $X_c$ which conjugates the dynamics of $f_c$ to the dynamics of  the (left) one-sided shift  $\sigma$ on $X_c$  \cite[pages 3110-3112]{SS17}.  Since the pressure is a topological invariant, this is sufficient to show that $t_c$ depends only on the dynamics of the shift map on the space of orbits.
   
Indeed,  \begin{equation}\label{ghewdgd} P(t\varphi_c; f_c) = P(t\varphi_c \circ g_c^{-1}; \sigma).\end{equation}  The notation $P(\phi; f)$ makes explicit the dependence of the pressure on the dynamics of $f.$  By definition, $g_c^{-1}$ maps every $(z_i)_0^\infty$ in $X_c$ to the unique $x\in J(f_c)$ such that $\pi f_c^n(x)=z_n,$ for every $n.$  Since $\varphi_c$ is defined by \eqref{lkmfg} and $(z_i)_0^{\infty}$ is the projected orbit of $x,$
$$(t\varphi_c\circ g_c^{-1})\left((z_i)_{0}^\infty\right)= -\log | p(z_1 -z_0)/(qz_0)|.$$  This shows that the potential  on the right side of \eqref{ghewdgd}   does not depend on $g_c$ {or}  $f_c.$ Hence the same is true for the unique zero of the pressure function, as desired.  \end{remark}
\section{Hausdorff dimension}

The \emph{$s$-dimensional Hausdorff outer measure} $\mathcal{H}^s$ of a set $\Lambda \subset \mathbb{C}$ is defined by

$$\mathcal{H}^s (\Lambda)= \lim_{\delta \to 0}  \inf \sum_{i=1}^{\infty} |U_i|^s,$$ where $\inf$ is taken over all countable coverings $\{U_i\}_0^{\infty}$ of $\Lambda$ with diameter $|U_i|\leq \delta.$
There exists a unique  nonnegative real number $d$ characterised by the following properties:  $\mathcal{H}^s(\Lambda) = 0$ if $s>d$  and $\mathcal{H}^s(\Lambda) =\infty$ if $0\leq s <d.$ The number $d$ is, by definition, the \emph{Hausdorff dimension} of $\Lambda, $ denoted by $\operatorname{dim_{H}} \Lambda.$

\begin{thmm}\label{kfieg} Suppose $J(\mathbf{f}_c)$ is a LEO hyperbolic repeller and let $t_c$ be the unique zero of the associated pressure function. {Then  $\mathcal{H}^{t_c}(J(\mathbf{f}_c))$ is finite,} and therefore $$\operatorname{dim_{H}} J(\mathbf{f}_c) \leq t_c .$$ 

\end{thmm}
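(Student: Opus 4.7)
The plan is to run thermodynamic formalism on the $\mathbb{C}^2$-extension $f_c: J(f_c) \to J(f_c)$, where the Ruelle--Perron--Frobenius theorem is available, and then push the resulting estimates down to the plane using the projection $\pi$ together with Koebe distortion.

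First I would apply Theorem \ref{gjdcs} to the expanding and topologically mixing (LEO implies mixing) map $f_c$ provided by Theorem \ref{gjegde}, with the H\"older potential $\phi = t_c \varphi_c$ supplied by Theorem \ref{igdg}, to obtain a Gibbs probability measure $\mu$ on $J(f_c)$. Since $t_c$ is by definition the zero of the pressure function, $P(\phi)=0$, and the identity \eqref{nbfwrh} turns the Gibbs property into
$$C^{-1}|g_{z_0\cdots z_n}'(z_0)|^{-t_c} \le \mu(B(x,n,\epsilon)) \le C\,|g_{z_0\cdots z_n}'(z_0)|^{-t_c}$$
for every $x \in J(f_c)$ with projected orbit $(z_j)_{0}^{\infty}$, where $\epsilon$ is any expansive radius for $f_c$ in the metric $d_s$ and $C = C_\epsilon$ depends on $\epsilon$.

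Next I would build a cover of $J(\mathbf{f}_c)$ from dynamic balls. Fix $\epsilon > 0$ small enough that $4\epsilon$ is an expansive constant for $\mathbf{f}_c$ as in Claim C of Lemma \ref{gjecb} (so the inverse branches $g^{-1}_{z_0 \cdots z_n}$ are univalent on $B_{4\epsilon}(z_n)$ for every finite orbit in $J(\mathbf{f}_c)$) and simultaneously $\epsilon$ serves as an expansive radius for $f_c$. For each $n$, take a maximal $(n,\epsilon)$-separated set $E_n \subset J(f_c)$, so that the dynamic balls $B(x,n,\epsilon)$ cover $J(f_c)$ while the balls $B(x,n,\epsilon/2)$ are pairwise disjoint. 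For any $y\in B(x,n,\epsilon)$, the planar orbits of $\pi(x)$ and $\pi(y)$ remain within distance $\epsilon$ of each other at every step, so Lemma \ref{gjecb} yields a univalent branch $g^{-1}_{z_0 \cdots z_n}: B_{4\epsilon}(z_n) \to \mathbb{C}$ mapping $\pi f_c^n(y)$ to $\pi(y)$. Thus $\pi B(x,n,\epsilon) \subset g^{-1}_{z_0 \cdots z_n}(B_{\epsilon}(z_n))$, and because $B_\epsilon(z_n)$ is exactly the one-quarter subdisk of $B_{4\epsilon}(z_n)$, Koebe's Lemma \ref{dgrwbff} gives
$$\operatorname{diam} \pi B(x,n,\epsilon) \le 2K\epsilon \cdot |g_{z_0 \cdots z_n}'(z_0)|^{-1},$$
with $K$ the universal Koebe constant.

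The conclusion then comes from summing at power $t_c$. Using disjointness of the half-radius dynamic balls together with the Gibbs lower bound applied at $\epsilon/2$,
$$\sum_{x \in E_n} |g_{z_0 \cdots z_n}'(z_0)|^{-t_c} \le C_{\epsilon/2} \sum_{x \in E_n} \mu(B(x,n,\epsilon/2)) \le C_{\epsilon/2}\, \mu(J(f_c)) = C_{\epsilon/2},$$
so $\sum_{x \in E_n}|\pi B(x,n,\epsilon)|^{t_c} \le (2K\epsilon)^{t_c} C_{\epsilon/2}$ is bounded independently of $n$; meanwhile \eqref{gjcmwe} forces each diameter to decay as $\lambda^{-n}$ uniformly, yielding $\mathcal{H}^{t_c}(J(\mathbf{f}_c)) < \infty$ and hence the stated dimension bound. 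The main obstacle I anticipate is the careful bookkeeping of scales: the expansive constant for $\mathbf{f}_c$ in the plane (governing when inverse branches coincide), the expansive constant for $f_c$ in the metric $d_s$ (governing the Gibbs property), and the Koebe one-quarter-disk restriction must all be reconciled so that each projected dynamic ball lies inside the Koebe regime of the relevant inverse branch; fortunately the three conditions can be met simultaneously by choosing a single sufficiently small $\epsilon$.
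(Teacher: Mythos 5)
Your proof is correct and takes essentially the same route as the paper: produce a Gibbs measure for $\phi=t_c\varphi_c$ on the $\mathbb{C}^2$-extension via Ruelle--Perron--Frobenius, cover $J(f_c)$ by dynamic balls whose half-radius versions are pairwise disjoint, project to the plane along inverse branches, and combine Koebe distortion with the Gibbs property at radius $\epsilon/2$ to bound $\sum |U_i|^{t_c}$ uniformly in $n$. The only cosmetic differences are that the paper phrases the combinatorics as a \emph{minimal cover} rather than a maximal $(n,\epsilon)$-separated set (an equivalent device), and your explicit insistence that the inverse branches be defined on $B_{4\epsilon}(z_n)$ so that $B_\epsilon(z_n)$ is the quarter-disk is a sharper reading of Lemma~\ref{gjecb} than the statement literally supplies, though the proof of that lemma (Claim~C gives $\epsilon=\epsilon'/4$) shows this is indeed what is available.
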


\begin{proof} {Fix a parameter $c$ for which {$\Lambda=J(\mathbf{f}_c)$} is a hyperbolic repeller. By Lemma \ref{gjecb}, $\Lambda$ has an expansive constant $\epsilon>0.$ }  Cover $J(f_c) \subset \mathbb{C}^2$ with finitely many dynamic balls  of fixed time $n$ and radius $\epsilon:$ 
  $$B(x_i, n, \epsilon) =\{ y\in J(f_c):  d_s(f_c^j(x_i), f_c^{j}(y))< \epsilon, \ 0\leq j \leq n \}.$$


  The finite covering is possible because $J(f_c)$ is compact. The covering is called \emph{minimal} if   $x_j \not \in B(x_i, n, \epsilon)$ whenever $i\neq j.$
  We may assume  that the covering is minimal by removing $B(x_i, n,  \epsilon)$ if $x_i$ is contained in some other $B(x_j, n, \epsilon).$  Since the covering is minimal, the corresponding dynamic balls with radius $\epsilon/2$  are pairwise disjoint, for if $B(x_i, n, \epsilon/2)$ intersects $B(x_j, n, \epsilon/2),$ then by the triangle inequality  $x_j$ must belong to $B(x_i, n, \epsilon).$ {But   $x_j$ should not belong to $B(x_i, n, \epsilon) $ when $i\neq j$ (the covering is minimal). } 

It will be convenient to denote such covering of $J(f_c)$  by $\mathcal{V}_n$, and its elements by $V_i= B(x_i, n, \epsilon).$ Now we are going to construct a covering of $J(\mathbf{f}_c).$ 

\vspace{0.1cm}
\noindent {\bf Claim:} {\it the family of  all sets $U_i = (g_{z_0\cdots z_n})^{-1}(B_{\epsilon}(z_n)), $ where $(z_k)_{0}^{\infty}$ is the projected orbit  of the centre $x_i$ of $V_i$,  is a covering of $J(\mathbf{f}_c)$. We shall denote this covering by $\mathcal{U}_n.$  }
\vspace{0.1cm}

{By Lemma \ref{gjecb}, each set $U_i$ is well defined since the domain of $(g_{z_0\cdots z_n})^{-1}$ contains $B_{\epsilon}(z_n).$}    As we shall see, the sets $U_i$ do in fact cover $J(\mathbf{f}_c)$ because each $U_i$ contains $\pi V_i$, $J(\mathbf{f}_c)$ is the projection of $J(f_c)$, and $\{V_i\}$ is a covering of $J(f_c).$ In order to check that $\pi V_i$ is contained in $U_i$ it suffices to show that $\pi(y)$ is in $U_i,$ whenever $y\in V_i.$  Since $y$ is in $B(x_i, n, \epsilon),$ by definition the distance with respect to the metric $d_s$ between $f_c^{j}(x_i)$ and $f_c^{j}(y)$ is strictly less than $\epsilon$, for $0\leq j \leq n.$ This means 
$$\sum_{k=0}^{\infty} s^{-k} |\pi f_c^{k+j}(x_i) - \pi f_c^{k+j}(y) | < \epsilon, $$ 
for $0\leq j\leq n.$ In particular, the first term of the above series is less than $\epsilon$, and   it follows that $|z_j  - w_j| < \epsilon$ for $0\leq j \leq n,$ where $(z_j)_0^{\infty}$ and $(w_j)_0^{\infty}$ are the projected orbits  of $x_i$ and $y$, respectively. 
{By Lemma \ref{gjecb}, $(g_{z_0 \cdots z_n})^{-1}$ maps $w_n$ to $w_0.$ Hence $w_0 =\pi(y)$ belongs to $U_i,$ as desired. }

The diameter of $\mathcal{U}_n$ is defined as the supremum of all diameters $| U_i |.$ 

\vspace{0.1cm}
\noindent {\bf Claim}:  {\it the diameter of $\mathcal{U}_n$ tends to zero as $n\to \infty.$} 
\vspace{0.1cm}

\noindent By  Definition \ref{gjege}  and Koebe's Lemma \ref{dgrwbff}, for every element $U_i$ of $\mathcal{U}_n$ we have 
$$|U_i| = |(g_{z_0\cdots z_n})^{-1}(B_{\epsilon}(z_n))| \leq K |g_{z_0 \cdots z_n}'(z_0)|^{-1} 2\epsilon \leq  (2\epsilon K C^{-1}) \lambda^{-n} \to 0,    $$
as $n\to \infty.$ The claim is proved.

By Theorem \ref{gjegde}, $f_c:J(f_c) \to J(f_c)$ is LEO and expanding. Therefore, for every H\"older continuous potential there corresponds a unique invariant Gibbs measure. Let $\phi=t_c\varphi_c.$ Since $P(\phi)=0$ and $\epsilon/2$ is also an expansive constant, the corresponding Gibbs measure $\mu$ for the system  satisfies:
\begin{equation} \label{jhjewer} {C_{\epsilon/2}^{-1}\leq \frac{\mu(B(x,n,\epsilon/2))}{\exp {S_n\phi(x)}} \leq C_{\epsilon/2},} \end{equation} for any $x$ in $J(f_c)$ and $n,$ where $C_{\epsilon/2}$ is a constant independent of $x$ and $n.$ {By Koebe's Lemma \ref{dgrwbff}, \eqref{nbfwrh} and \eqref{jhjewer}  we have}

\begin{equation}
\begin{split}
 |U_i|^{t_c} = |(g_{z_0\cdots z_n})^{-1}(B_{\epsilon}(z_n)) |^{t_c} & \leq (K |g_{z_0 \cdots z_n}'(z_0)  |^{-1}2\epsilon)^{t_c} \\  & \leq (2\epsilon K)^{t_c}  C_{\epsilon/2} \mu(B(x_i, n, \epsilon/2)),
\end{split}
\end{equation}

\noindent for every element $U_i$ of $\mathcal{U}_n.$ Since $\mu$ is a probability measure and the dynamic balls $B(x_i, n, \epsilon/2)$ are pairwise disjoint, we conclude that $ \sum_{i=0}^{\infty} |U_i|^{t_c} \leq (2\epsilon K)^{t_c}C_{\epsilon/2}. $  Since $\mathcal{U}_n$ is a covering of $J(\mathbf{f}_c)$ whose diameter  tends to zero as $n\to \infty,$ the $t_c$-dimensional Hausdorff measure of $J(\mathbf{f_c})$ is finite. Hence, $\operatorname{dim_{H}}J(\mathbf{f}_c) \leq t_c.$
\end{proof}

\begin{cor} \label{abc} For any parameter $c$ sufficiently close to a simple centre,
$$\operatorname{dim_{H}} J(\mathbf{f}_c) \leq t_c. $$

\end{cor}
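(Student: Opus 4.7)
The plan is to deduce Corollary \ref{abc} as an immediate chaining of two results already established in the paper, without introducing any new machinery. First, I would invoke the unnamed theorem stated just after Definition \ref{hgdw} (quoted from \cite{Rigidity} as Theorem 5.4): if $a$ is a simple centre for the family $\mathbf{f}_c$, then there exists a neighbourhood $U$ of $a$ such that $J(\mathbf{f}_c)$ is a LEO hyperbolic repeller for every $c \in U$. This places every parameter sufficiently close to $a$ into the hypothesis of the main theorem of Section 4.

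With that in hand, I would then appeal to Theorem \ref{igdg} to produce, for each such $c$, the Bowen parameter $t_c \in (0, \infty)$ as the unique zero of the pressure function $t \mapsto P(t \varphi_c)$ associated with the $\mathbb{C}^2$-extension $f_c : J(f_c) \to J(f_c)$. This guarantees that the quantity $t_c$ appearing in the statement is well-defined.

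Finally, the conclusion $\dim_H J(\mathbf{f}_c) \leq t_c$ follows directly from Theorem \ref{kfieg}, whose hypothesis is precisely that $J(\mathbf{f}_c)$ is a LEO hyperbolic repeller with $t_c$ the unique zero of its pressure function. Since each step above is a literal citation, no further estimate is required.

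There is essentially no main obstacle to overcome here: the corollary is a convenient packaging of Theorem \ref{kfieg} with the openness of the LEO hyperbolic property near simple centres. The only point worth verifying carefully is that the neighbourhood produced by the simple-centre theorem is the same kind of neighbourhood on which Theorem \ref{igdg} constructs $t_c$ and on which Theorem \ref{kfieg} applies; but all three statements parameterise $c$ in the same way and share the hypothesis that $J(\mathbf{f}_c)$ is a LEO hyperbolic repeller, so the chaining is immediate.
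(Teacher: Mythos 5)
Your proposal is correct and follows essentially the same route as the paper: cite the simple-centre theorem (Theorem 5.4 of \cite{Rigidity}) to conclude that $J(\mathbf{f}_c)$ is a LEO hyperbolic repeller for $c$ near the centre, then apply Theorem \ref{kfieg}. The paper's proof is even terser, leaving the reference to Theorem \ref{igdg} (which guarantees $t_c$ is well-defined) implicit, so your version is marginally more complete but not a different argument.
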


\begin{proof}
By Theorem 5.4 of \cite{Rigidity}, the Julia set $J(\mathbf{f}_c)$ is a LEO hyperbolic repeller, for every $c$ in a certain subset $H_{\beta}$ of the parameter space. By definition 4.1 of \cite{Rigidity}, the set $H_{\beta}$ includes a sufficiently small neighbourhood of every simple centre. \end{proof}   

\begin{cor} \label{nmcs} Suppose $J(\mathbf{f}_c)$ is a LEO hyperbolic repeller and  $|g'_{z,w}(z) | \geq  \kappa >1,$ for every $z,w$ in $J_c$ such that $w$ is an image of $z$ under $\mathbf{f}_c.$ Then \begin{equation} \label{dgew} \operatorname{dim_{H}} J(\mathbf{f}_c)  \leq (\log p)/(\log \kappa). \end{equation}

\end{cor}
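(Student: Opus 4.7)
The plan is to deduce the bound from Theorem \ref{kfieg} by controlling the location of the zero $t_c$ of the pressure function. Since Theorem \ref{kfieg} already gives $\dim_H J(\mathbf{f}_c) \leq t_c$, it suffices to prove the inequality $t_c \leq (\log p)/(\log \kappa)$.

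The key observation is that under the hypothesis $|g'_{z,w}(z)| \geq \kappa$ for every edge $z \mapsto w$ in $J(\mathbf{f}_c)$, the chain rule upgrades the hyperbolicity estimate \eqref{gjcmwe} of Definition \ref{gjege} to the uniform bound $|g'_{z_0\cdots z_n}(z_0)| \geq \kappa^n$, i.e.\ we may take $C = 1$ and $\lambda = \kappa$. Equivalently, the potential $\varphi_c$ of \eqref{lkmfg} satisfies $\varphi_c(x) \leq -\log \kappa$ pointwise on $J(f_c)$.

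Now I would reuse the inequality already derived in the proof of Theorem \ref{igdg},
\[
P\bigl((t+\tau)\varphi_c\bigr) \;\leq\; -t\log(\lambda) \,+\, P(\tau \varphi_c),
\]
but with the improved constant $\lambda = \kappa$. Setting $\tau = 0$ and invoking Remark \ref{qodwe}, which identifies $P(0)$ with the topological entropy $\log p$ of the $p$-to-$1$ map $f_c$, yields
\[
P(t\varphi_c) \;\leq\; \log p - t\log \kappa
\]
for every $t \geq 0$. In particular, at the value $t^{*} = (\log p)/(\log \kappa)$, the right-hand side equals zero, so $P(t^{*}\varphi_c) \leq 0$. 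Because the pressure function is strictly decreasing on $[0,\infty)$ (as established in Theorem \ref{igdg}) and vanishes precisely at $t_c$, this forces $t_c \leq t^{*}$.

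Combining this with Theorem \ref{kfieg} gives $\dim_H J(\mathbf{f}_c) \leq t_c \leq (\log p)/(\log \kappa)$, which is \eqref{dgew}. I do not anticipate any real obstacle: the whole argument is a direct specialisation of the pressure estimates already present in the proof of Theorem \ref{igdg}, the only new input being the pointwise lower bound on $|g'_{z,w}|$, which merely lets us replace the non-uniform hyperbolicity constants $(C,\lambda)$ by $(1,\kappa)$.
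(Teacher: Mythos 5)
Your proof is correct, but it takes a genuinely different route from the paper. The paper's proof works with the transfer operator: it sets $\phi = t_c\varphi_c$, invokes the Ruelle--Perron--Frobenius Theorem \ref{gjdcs}(c) to conclude $\mathcal{L}_{\phi}^n(1) \to h > 0$ uniformly, and then computes $\mathcal{L}_{\phi}^n(1)(x) = \sum_{f_c^n(y)=x}|g'_{w_0\cdots w_n}(w_0)|^{-t_c} \leq p^n\kappa^{-nt_c}$ directly from \eqref{nbfwrh}; since this sequence must converge to a \emph{positive} function, one must have $p\kappa^{-t_c}\geq 1$, which rearranges to \eqref{dgew}. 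You instead argue at the level of the pressure function itself: you observe that the hypothesis yields the uniform estimate $|g'_{z_0\cdots z_n}(z_0)|\geq \kappa^n$ (equivalently $\varphi_c\leq -\log\kappa$), feed this into the same chain of inequalities used in the proof of Theorem \ref{igdg} with $\tau=0$ to obtain $P(t\varphi_c)\leq \log p - t\log\kappa$, then evaluate at $t^* = (\log p)/(\log\kappa)$ and use the strict monotonicity of the pressure to conclude $t_c\leq t^*$. Both arguments are sound and have comparable length; the paper's proof re-invokes the RPF convergence, while yours leans on the pressure formula and monotonicity (using RPF only implicitly through Remark \ref{qodwe} to evaluate $P(0)=\log p$). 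Your version is arguably the more transparent ``Bowen-formula'' style argument, since it directly locates an upper bound for the zero of the pressure, whereas the paper's version trades that for the cleaner one-line computation of $\mathcal{L}_{\phi}^n(1)$; either could serve as the published proof.
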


\begin{proof}  Let $\phi= t_c \varphi_c.$ Since $P(\phi)=0$, by the Ruelle-Perron-Frobenius Theorem \ref{gjdcs}, $ \mathcal{L}_{\phi}^{n}(1) \to h$ uniformly on $J(f_c),$ where $h$ is a continuous function from $J(f_c)$ to $(0,\infty).$ 
By \eqref{nbfwrh}, we have  \begin{equation} \label{gherw}\mathcal{L}_{\phi}^n(1)(x) = \sum_{f^n_c(y)=x} e^{S_n\phi(y)}\cdot 1 = \sum_{f_c^n(y)=x} |g_{w_0\cdots w_n}'(w_0)|^{-t_c} \leq  p^n \kappa^{-nt_c},\end{equation}
where $(w_i)_0^{\infty}$ is the projected orbit of $y.$ In the above estimate we have used the hypothesis and the fact that  $f_c^n$ is a $p^n$-to-$1$ map. Since $\mathcal{L}_{\phi}^n(1)$ converges to a positive function, we have $p\kappa^{-t_c} \geq 1,$ otherwise $p^n\kappa^{-n t_c}$ would converge to zero.
 Solving the inequality $p\kappa^{-t_c} \geq 1$ for $t_c$ yields \eqref{dgew}. \end{proof}

\begin{cor}[\bf Zero area] \label{hjeeh}If $q^2<p$ and $c$ is sufficiently close to zero, then   \begin{equation}\label{qpde} \operatorname{dim_{H}} J(\mathbf{f}_c) < 2.\end{equation}

 \end{cor}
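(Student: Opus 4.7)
The plan is to deduce Corollary \ref{hjeeh} from Corollary \ref{nmcs}. That result bounds $\operatorname{dim_{H}} J(\mathbf{f}_c) \leq (\log p)/(\log \kappa)$ whenever $|g_{z,w}'(z)| \geq \kappa > 1$ uniformly on orbit pairs in $J(\mathbf{f}_c)$, and the right-hand side is strictly less than $2$ precisely when $\kappa > \sqrt{p}$. The task thus reduces to exhibiting, for every $c$ close enough to zero, a uniform expansion constant $\kappa_c > \sqrt{p}$.

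Implicit differentiation of $(w-c)^q = z^p$ combined with the equation itself yields
$$g_{z,w}'(z) \;=\; \frac{p(w-c)}{qz}.$$
At $c=0$ the correspondence $\mathbf{f}_0$ preserves the unit circle: any orbit $(z_i)$ satisfies $|z_{i+1}| = |z_i|^{p/q}$, and since $p/q > 1$ the only possible periodic modulus is $|z|=1$. Because the $q$ angular branches $\theta \mapsto (p\theta+2\pi k)/q$ have periodic points dense in $\mathbb{R}/2\pi\mathbb{Z}$, this gives $J(\mathbf{f}_0) = S^1$, on which the displayed expression has modulus exactly $p/q$. The hypothesis $q^2 < p$ is equivalent to $p/q > \sqrt{p}$, which leaves a strict gap above the critical value $\sqrt{p}$.

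To transfer this to small $c$, I would invoke the holomorphic motion $h_c \colon J(f_0) \to J(f_c)$ of section \ref{bndhw}(A). Composing with $\pi$ deforms $J(\mathbf{f}_0) = S^1$ continuously into $J(\mathbf{f}_c)$, and the map $x \mapsto (\pi h_c(x),\, \pi f_c h_c(x))$ parameterises a continuously varying family of orbit pairs in $J(\mathbf{f}_c)$ by \ref{bndhw}(B). Joint continuity of $(c,z,w) \mapsto |p(w-c)/(qz)|$ on a neighbourhood of $\{0\} \times S^1 \times S^1$, combined with compactness of $J(f_0)$, produces some $\delta > 0$ and $\kappa_c$ close to $p/q$ with $|g_{z,w}'(z)| \geq \kappa_c > \sqrt{p}$ for every such lifted pair when $|c| < \delta$. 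The conjugacy between $f_c$ on $J(f_c)$ and the shift on the space $X_c$ of forward $\mathbf{f}_c$-orbits (remark after Theorem \ref{igdg}) ensures that every orbit pair $(z,w)$ with $z,w \in J(\mathbf{f}_c)$ and $w \in \mathbf{f}_c(z)$ arises from some $x\in J(f_c)$, so the bound is genuinely uniform on all orbit pairs. Corollary \ref{nmcs} then gives $\operatorname{dim_{H}} J(\mathbf{f}_c) \leq (\log p)/(\log \kappa_c) < 2$.

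The main obstacle is the continuous-dependence step: carefully verifying that the set of orbit pairs in $J(\mathbf{f}_c)$ is parameterised continuously by $J(f_0)$ via the holomorphic motion, so that the expansion estimate at $c=0$ transfers uniformly. Everything else — the identification $J(\mathbf{f}_0) = S^1$, the explicit formula for $g'$, and the strict inequality $p/q > \sqrt{p}$ under $q^2 < p$ — is elementary.
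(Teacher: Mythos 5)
Your proposal is correct and is essentially the paper's proof: both reduce to Corollary \ref{nmcs}, use $J(\mathbf{f}_0)=\mathbb{S}^1$ together with the formula $g_{z,w}'(z)=p(w-c)/(qz)$ to produce a uniform lower bound $\kappa$ close to $p/q$ for small $|c|$, and observe that $q^2<p$ is exactly the condition $\log p/\log(p/q)<2$. The only difference is cosmetic --- you transfer the expansion estimate from $c=0$ through the holomorphic motion of section \ref{bndhw}, whereas the paper cites Hausdorff continuity of $c\mapsto J(\mathbf{f}_c)$ (Theorem 5.11 of the cited rigidity paper); these are two faces of the same stability result and serve identically here.
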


\begin{proof}   The derivative of a branch $g(z)$ of $\mathbf{f}_c$ is given by $(p/q)(w-c)/z$ where $w=g(z).$ {The set function $c\mapsto J(\mathbf{f}_c)$  is continuous with respect to the Hausdorff topology for parameters $c$ in an open set $H_{\beta}$  which contains zero and every simple centre (see Theorem 5.11 of \cite{Rigidity}).}  Since $J(\mathbf{f}_0)$ is the unit circle $\mathbb{S}^1,$ we conclude that $|g_{z,w}'(z)| \geq \kappa$ for $z$ and $w$ in $J(\mathbf{f}_c)$, where  $\kappa>1$ can be chosen arbitrarily close to $p/q$ as $c$ tends to zero. It is easy to check that $\log(p)/\log (p/q) <2$ if, and only if, $q^2<p.$ In particular, $\log(p)/\log(\kappa) <2$ for $\kappa$ close to $p/q.$ From Corollary \ref{nmcs} we conclude \eqref{qpde}, for every $c$ sufficiently close to zero. 
\end{proof}

\begin{remark} \normalfont {\it For arbitrary integers $p>q\geq 2$, we have $t_0 >1,$ and therefore the estimate of Theorem \ref{kfieg} is not sharp if $c=0.$ }

{\it Proof.} Indeed,  in \eqref{gherw} $|g'_{w_0 \cdots w_n}(w_0)|$ equals $(p/q)^n$  when all the elements of the projected orbit $(w_i)_0^{\infty}$ are contained in  $\mathbb{S}^1.$  It follows from \eqref{gherw} that $\mathcal{L}_{\phi}^n(1)$ is simply $(p/q)^{-nt_0}p^n$ when $\phi$ is the potential $t_0\varphi_0.$ In particular, $\mathcal{L}_{\phi}(1)$ is a constant function $\omega \cdot 1$, where $\omega$ is a real number and $1$ is the function which is constantly one on $J(f_0).$ Since $\mathcal{L}_{\phi}^n(1)$ is $\omega^n\cdot 1$, which converges to a positive function $h>0$ on $J(f_0)$, it follows that $\omega=1$ and consequently $h=1.$ Since $\mathcal{L}_{\phi}(1)$ is $(p/q)^{-t_0}p$ and $1$ is a fixed point of $\mathcal{L}_{\phi}$, it follows that $(p/q)^{-t_0}p=1.$ Since $p>q\geq 2$, we have $t_0 >1$. 
\end{remark}

\begin{cor} \label{abcxz}If every orbit of zero  under $\mathbf{f}_c$ diverges to infinity, then $$\dim_{H} J(\mathbf{f}_c) \leq t_c.$$
\end{cor}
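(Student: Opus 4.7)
The plan is to reduce Corollary \ref{abcxz} directly to Theorem \ref{kfieg}, by first verifying that the hypothesis (all orbits of zero diverge to infinity) places $c$ in the set $H_{\beta}$ mentioned in the introduction, so that $J(\mathbf{f}_c)$ is a LEO hyperbolic repeller and Theorem \ref{kfieg} applies.

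More concretely, the first step is to recall the definition of $\mathcal{M}_{\beta,0}$ given in the introduction: it consists of those parameters $c$ for which $0$ has at least one \emph{bounded} forward orbit under $\mathbf{f}_c$. Under the hypothesis of the corollary, every orbit of zero diverges to $\infty$, in particular no forward orbit of $0$ is bounded, and therefore $c$ belongs to the complement of $\mathcal{M}_{\beta,0}$.

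Next, I would invoke the structural result from \cite{Rigidity} that is quoted in the introduction: there exists an open set $H_{\beta}$ containing the complement of $\mathcal{M}_{\beta,0}$ (and every simple centre) such that, for every $c\in H_{\beta}$, the Julia set $J(\mathbf{f}_c)$ is a LEO hyperbolic repeller. Since the previous step shows $c\notin \mathcal{M}_{\beta,0}$, we conclude $c\in H_{\beta}$, hence $J(\mathbf{f}_c)$ is a LEO hyperbolic repeller.

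Finally, with this hypothesis in place the conclusion $\dim_H J(\mathbf{f}_c)\leq t_c$ follows from Theorem \ref{kfieg} applied to $\mathbf{f}_c$, where $t_c$ is the unique zero of the associated pressure function guaranteed by Theorem \ref{igdg}. There is really no obstacle here; the only point that requires any care is correctly citing the definition of $\mathcal{M}_{\beta,0}$ (``at least one bounded forward orbit'') and matching it to the hypothesis (``every orbit diverges to infinity''), which is automatic.
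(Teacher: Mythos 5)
Your proof is correct and follows essentially the same route as the paper: reduce to showing $J(\mathbf{f}_c)$ is a LEO hyperbolic repeller (which the paper cites directly from Theorem~5.4 of \cite{Rigidity}, whereas you unwind this through $\mathcal{M}_{\beta,0}$ and $H_{\beta}$), and then apply Theorem~\ref{kfieg}.
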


\begin{proof}  According to \cite[Theorem 5.4]{Rigidity}, if every orbit of zero under $\mathbf{f}_c$ diverges to infinity, then $J(\mathbf{f}_c)$ is a LEO hyperbolic repeller. It follows from Theorem \ref{kfieg} that $\dim_{H} J(\mathbf{f}_c) \leq t_c.$  \end{proof}

\subsection*{Acknowledgments}
Research partially supported by the grants 2016/16012-6 S\~ao Paulo Research Foundation and CNPq 232706/2014-0.
  The author would like to thank Daniel Smania for many discussions and key insights which led to some preliminary results of this paper, already described in the author's PhD thesis \cite{Carlos}.

\bibliographystyle{amsplain}
\bibliography{oi}

\providecommand{\bysame}{\leavevmode\hbox to3em{\hrulefill}\thinspace}
\providecommand{\MR}{\relax\ifhmode\unskip\space\fi MR }
\providecommand{\MRhref}[2]{%
  \href{http://www.ams.org/mathscinet-getitem?mr=#1}{#2}
}
\providecommand{\href}[2]{#2}
\begin{thebibliography}{10}

\bibitem{Bowen1979}
Rufus Bowen, \emph{Hausdorff dimension of quasi-circles}, Publications
  Math{\'e}matiques de l'Institut des Hautes {\'E}tudes Scientifiques
  \textbf{50} (1979), no.~1, 11--25.

\bibitem{BL19}
S.~Bullett and L.~Lomonaco, \emph{Mating quadratic maps with the modular group
  {II}.}, Invent. Math. \textbf{220} (2020), 185--210.

\bibitem{BL17}
Shaun Bullett and Luna Lomonaco, \emph{Dynamics of modular matings}, arXiv
  preprint: https://arxiv.org/abs/1707.04764, 2017.

\bibitem{BL20}
\bysame, \emph{Mating quadratic maps with the modular group {III}: the modular
  {M}andelbrot set}, arXiv preprint: https://arxiv.org/pdf/2010.04273.pdf,
  2020.

\bibitem{BLS}
Shaun Bullett, Luna Lomonaco, and Carlos Siqueira, \emph{Correspondences in
  complex dynamics}, New Trends in One-Dimensional Dynamics (Cham)
  (Maria~Jos{\'e} Pacifico and Pablo Guarino, eds.), Springer International
  Publishing, 2019, pp.~51--75.

\bibitem{Bullett1994}
Shaun Bullett and Christopher Penrose, \emph{Mating quadratic maps with the
  modular group}, Inventiones mathematicae \textbf{115} (1994), no.~1,
  483--511.

\bibitem{DH84}
Adrien Douady and John~H. Hubbard, \emph{{\'E}tude dynamique des polyn\^omes
  complexes. {P}artie {I.}}, Publications Math\'ematiques d'Orsay \textbf{84}
  (1984), no.~2, 75.

\bibitem{DH85}
\bysame, \emph{{\'E}tude dynamique des polyn\^omes complexes. {P}artie {II}.
  (with the collaboration of {P.} {L}avaurs, {T}an {L}ei and {P.}
  {S}entenac).}, Publications Math\'ematiques d'{O}rsay \textbf{85} (1985),
  no.~4, v--154.

\bibitem{Mukherjee}
Seung-Yeop Lee, Mikhail Lyubich, Nikolai~G. Makarov, and Sabyasachi Mukherjee,
  \emph{Schwarz reflections and anti-holomorphic correspondences}, Advances in
  Mathematics \textbf{385} (2021), 107766.

\bibitem{ConformalF}
Feliks Przytycki and Mariusz Urbanski, \emph{Conformal fractals: Ergodic theory
  methods}, 1st ed., Cambridge University Press, New York, NY, USA, 2010.

\bibitem{rivera-letelier2001}
Juan Rivera-Letelier, \emph{On the continuity of {H}ausdorff dimension of
  {J}ulia sets and similarity between the {M}andelbrot set and {J}ulia sets},
  Fund. Math. \textbf{170} (2001), no.~3, 287--317.

\bibitem{Ruelle}
David Ruelle, \emph{Repellers for real analytic maps}, Ergodic Theory and
  Dynamical Systems \textbf{2} (1982), 99--107 (eng).

\bibitem{Simon97}
K\'aroly Simon, \emph{The {H}ausdorff dimension of the {S}male-{W}illiams
  solenoid with different contraction coefficients}, Proceedings of the
  American Mathematical Society \textbf{125} (1997), no.~4, 1221--1228 (eng).

\bibitem{Carlos}
Carlos Siqueira, \emph{Dynamics of holomorphic correspondences}, Ph.D. thesis,
  University of S\~ao Paulo, ICMC, Digital library USP, 8 2015.

\bibitem{Rigidity}
Carlos Siqueira, \emph{Dynamics of hyperbolic correspondences}, Ergodic Theory
  and Dynamical Systems (2021), {\it first view} 1--32.

\bibitem{SS17}
Carlos Siqueira and Daniel Smania, \emph{Holomorphic motions for unicritical
  correspondences}, Nonlinearity \textbf{30} (2017), no.~8, 3104.

\end{thebibliography}




\end{document}